\numberwithin{equation}{section}
\numberwithin{figure}{section}
\theoremstyle{plain}
\newtheorem{thm}{Theorem}[section]
  \theoremstyle{plain}
  \newtheorem{defn}[thm]{Definition}
  \theoremstyle{plain}
  \newtheorem{prop}[thm]{Proposition}
  \theoremstyle{plain}
  \theoremstyle{plain}
  \newtheorem{cor}[thm]{Corollary}
 \theoremstyle{definition}
  \newtheorem{example}[thm]{Example}
  \theoremstyle{definition}
  \newtheorem{rem}[thm]{Remark}
  \newcounter{casectr}
  \newenvironment{caseenv}
  {\begin{list}{{\itshape\ Case} \arabic{casectr}.}{%
   \setlength{\leftmargin}{\labelwidth}
   \addtolength{\leftmargin}{\parskip}
   \setlength{\itemindent}{\listparindent}
   \setlength{\itemsep}{\medskipamount}
   \setlength{\topsep}{\itemsep}}
   \setcounter{casectr}{0}
   \usecounter{casectr}}
  {\end{list}}
\begin{document}

\global\long\def\degree#1{#1^{\circ}}
\global\long\def\mathfun#1{\,\mathrm{#1}}
\global\long\def\prob{\mathfun{Pr}}
\global\long\def\edist{\sim}
\global\long\def\stoless{\le_{\mathrm{st}}\:}
\global\long\def\stolesstr{<_{\mathrm{st}}\:}
\global\long\def\stogr{\ge_{\mathrm{st}}\:}
\global\long\def\stogrtr{>_{\mathrm{st}}\:}
\global\long\def\uprevexch{\leftrightarrow_{\mathrm{RE}}^{+}\,}
\global\long\def\dnrevexch{\leftrightarrow_{\mathrm{RE}}^{-}\,}
\global\long\def\isrevexch{\rightarrow_{\mathrm{RE}}\,}
\global\long\def\arerevexch{\leftrightarrow_{\mathrm{RE}}\,}
\global\long\def\mycondition#1{\mathrm{C}_{#1}}
\global\long\def\reverseexch#1{\mathrm{RE}_{#1}}
\newcommand{\tube}{\Xi}
\def\eqd{\buildrel d\over=}
\def\be{\mathbf{e}}
\def\bX{\mathbf{X}}

\begin{frontmatter}

\title{Reverse Exchangeability and Extreme Order Statistics}
\runtitle{Reverse Exchangeability}

\author{\fnms{Yindeng} \snm{Jiang}\corref{}\ead[label=e1]{yindeng@uw.edu}}
\address{University of Washington Investment Management
Seattle, Washington 98105 
\printead{e1}}
\affiliation{University of Washington}

\author{\fnms{Michael D.} \snm{Perlman}\corref{}\ead[label=e2]{michael@stat.washington.edu}}
\address{Department of Statistics 
University of Washington 
Seattle, WA 98195-4322 
\printead{e2}}
\affiliation{University of Washington}

\runauthor{Jiang and Perlman}


\begin{abstract}
For a bivariate random vector $(X, Y)$, symmetry conditions are presented that yield stochastic orderings
among $|X|$, $|Y |$, $|\max(X, Y )|$, and $|\min(X, Y )|$. Partial extensions of these
results for multivariate random vectors $(X_1, . . . ,X_n)$ are also given.
\end{abstract}

\begin{keyword}
\kwd{reverse exchangeability}
\kwd{extreme order statistics}
\kwd{bivariate distribution}
\kwd{multivariate distribution}
\kwd{symmetry}
\end{keyword}

\end{frontmatter}

\section{Introduction}

\citet{Jiang2009factor} introduced a new estimator of value-at-risk
(among other risk and performance measures) for investment funds with
short performance histories. In deriving its large sample variance,
Jiang made use of the following identity (with some rearrangement;
see \citet[Pg. 106, Eq. (3.6.2.4)]{Jiang2009factor}):
\begin{equation}
\Phi_{2}(x,x;\rho)-\Phi_{2}(-x,-x;\rho)=\Phi(x)-\Phi(-x),\ \ \forall x\ge0,\label{introeq1}
\end{equation}
 where $\Phi(\cdot)$ is the cumulative distribution function (cdf)
of the standard normal distribution and $\Phi_{2}(\cdot,\cdot;\rho)$
is the cdf of the standard bivariate normal distribution with correlation
$\rho$.

The result \eqref{introeq1} was somewhat unexpected because the left
hand side is seemingly dependent on $\rho$. Note that the left hand
side is in fact the cdf of $|\max(X,Y)|$, while the right hand side
is the cdf of $|X|$, where $(X,Y)$ is distributed as the standard
bivariate normal distribution with correlation $\rho$. Hence
\eqref{introeq1} implies that \begin{equation}
|\max(X,Y)|\eqd|X|.\label{introeq2}\end{equation}

It is natural to wonder whether this simple but elegant result extends
to bivariate distributions other than the standard bivariate normal
distribution. \prettyref{thm:Rev-exch-2} shows that it does hold
for a broad range of bivariate distributions that are \textit{reverse
exchangeable}.

Next we consider multivariate distributions. For any sequence $X_{1},X_{2},\dots$
of random variables, clearly $\max(X_{1},\dots,X_{n})$ is nondecreasing in $n$,
but this need not be true for $|\max(X_{1},\dots,X_{n})|$: simply
consider a non-random sequence that begins with $-1,0$. Furthermore,
as illustrated by \prettyref{exa:coordinateaxes}, \eqref{introeq2}
need not hold even for multivariate distributions with strong symmetries.
In Theorems \ref{thm:Rev-exch-n} and \ref{thm:Rev-exch-n3}, however,
it is shown that $|\max(X_{1},\dots,X_{n})|$ is stochastically nondecreasing
in $n$ under either of two fairly non-restrictive multivariate extensions
of reverse exchangeability.

A series of examples are presented that illustrate the general results.

\section{Reverse Exchangeability for Bivariate Distributions}
\begin{defn}
\label{Def1} The bivariate random vector $(X,Y)$ is called \emph{reverse
exchangeable (RE)} if $(X,Y)\eqd(-Y,-X)$, that is, $(X,Y)$ and $(-Y,-X)$
are identically distributed.
\end{defn}
Reverse exchangeability simply means that the joint distribution of
$(X,Y)$ is symmetric about the line $y=-x$. Recognizing this allows
us to state the condition in terms of simple reflection. Imagine rotating
the plane clockwise by $\degree{45}$, so the symmetry line $y=-x$
becomes the vertical axis. The point $(X,Y)$ is rotated to
\begin{equation*}
(U,\,V):=\left(\frac{X+Y}{\sqrt{2}},\frac{X-Y}{\sqrt{2}}\right).
\end{equation*}
Then we have the following result:\footnote{Condition \eqref{Ucondition} can be stated as $(U,V)\eqd(-U,V)$,
but we use conditional distributions since this allows for a
natural generalization -- see Definition \ref{Def2}.}
\begin{prop}
\label{pro:Rev-Exchangable}$(X,Y)$ is RE
if and only if the conditional distributions of $U$ and $-U$ given $V$ are the same, i.e.,
\begin{equation}
(U\mid V=v) \eqd (-U\mid V=v),\quad\mathrm{for\ a.e.}\ v\in(-\infty,\infty).\label{Ucondition}
\end{equation}

\end{prop}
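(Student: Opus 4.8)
The plan is to establish the claimed equivalence by exploiting the linear change of variables $(U,V) = \bigl((X+Y)/\sqrt{2},\,(X-Y)/\sqrt{2}\bigr)$, which is an orthogonal (hence invertible and measure-preserving) transformation of the plane. The key observation is that the map $(X,Y)\mapsto(-Y,-X)$ appearing in Definition~\ref{Def1} corresponds, in the rotated coordinates, to the map $(U,V)\mapsto(-U,V)$. Indeed, if I let $(X',Y') = (-Y,-X)$ and compute the associated $(U',V')$, I find $U' = (X'+Y')/\sqrt{2} = (-Y-X)/\sqrt{2} = -U$ and $V' = (X'-Y')/\sqrt{2} = (-Y+X)/\sqrt{2} = V$. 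So the reverse-exchangeability symmetry is exactly the reflection of $U$ about zero while holding $V$ fixed.

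First I would verify this coordinate computation explicitly and note that, because the transformation is a fixed linear bijection, $(X,Y)\eqd(-Y,-X)$ holds if and only if the images of the two sides have the same joint law, i.e. $(U,V)\eqd(-U,V)$. This reduces the proposition to showing that the \emph{joint} identity $(U,V)\eqd(-U,V)$ is equivalent to the \emph{conditional} identity \eqref{Ucondition}, namely $(U\mid V=v)\eqd(-U\mid V=v)$ for a.e.\ $v$.

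For that equivalence I would argue through the joint distribution's disintegration into the marginal law of $V$ together with the conditional law of $U$ given $V=v$. The marginal distribution of $V$ is unchanged by the transformation $(U,V)\mapsto(-U,V)$ since $V$ itself is fixed, so the only degree of freedom is the conditional law of the first coordinate. Writing the joint law as the mixture of these conditionals against the common marginal of $V$, the two joint distributions $(U,V)$ and $(-U,V)$ agree if and only if their conditional components agree for $V$-almost every $v$; this is the standard uniqueness of regular conditional distributions. I would state this carefully using expectations of bounded measurable test functions $g(U)h(V)$: the equality $\mathbb{E}[g(U)h(V)] = \mathbb{E}[g(-U)h(V)]$ for all such $g,h$ characterizes $(U,V)\eqd(-U,V)$, and by factoring out the $V$-marginal it is equivalent to $\mathbb{E}[g(U)\mid V=v] = \mathbb{E}[g(-U)\mid V=v]$ a.e., which is precisely \eqref{Ucondition}.

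The main obstacle is purely measure-theoretic rather than computational: I must ensure that regular conditional distributions exist and are essentially unique, and that the ``a.e.\ $v$'' quantifier is interpreted with respect to the marginal law of $V$ so that the passage between the joint and conditional statements is a genuine equivalence in both directions. Establishing $(X,Y)\eqd(-Y,-X)\iff(U,V)\eqd(-U,V)$ is immediate from invertibility of the change of variables, so essentially all of the care is concentrated in this disintegration step; once it is handled via a monotone-class or test-function argument, the proof is complete.
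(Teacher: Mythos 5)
Your proof is correct and takes essentially the same route the paper intends: the paper states this proposition without a formal proof, observing that RE is symmetry about the line $y=-x$ and noting in its footnote that \eqref{Ucondition} is just $(U,V)\eqd(-U,V)$ after the $45^{\circ}$ rotation, which is precisely your reduction. Your disintegration/test-function step merely makes rigorous the passage between the joint identity $(U,V)\eqd(-U,V)$ and the conditional formulation, which the paper treats as standard.
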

Reverse exchangeability is a rather weak condition. For instance,
if $X,Y$ are iid (independent and identically distributed) and $X\eqd-X$ then $(X,Y)$ is RE, but the converse
is not true. A condition weaker than iid but still sufficient for
RE is that the distribution of $(X,Y)$ be ESCI, that is, exchangeable
(E) and sign-change-invariant (SCI).%
\footnote{The ESCI condition is equivalent to group-invariance under the dihedral
group generated by all permutations and sign-changes of coordinates.
See \citet{eaton1977reflection,eaton1982review,eaton1987lectures}
for discussions of group-invariance.%
} Clearly ESCI is strictly stronger than RE since ESCI also implies
symmetry about the line $y=x$, as well as symmetry about both coordinate
axes. It is too strong for our purposes, however, since it is not
satisfied by the class of standard bivariate elliptical distributions
(i.e. with location parameter $(0,0)$ and identical marginals) with
nonzero correlation. Examples of interest include the standard bivariate
normal and bivariate t distributions.

\setlength{\unitlength}{0.5in}

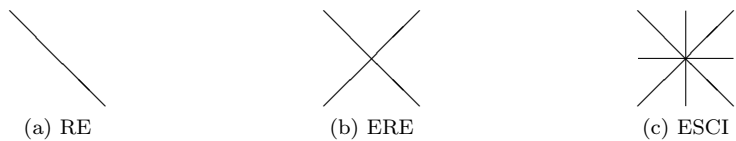
\begin{figure}[H]
\begin{centering}
\subfloat[RE]{\centering{}%
\begin{minipage}[t]{0.33\textwidth}%
\begin{center}
\begin{picture}(1,1)
\put(0,1){\line(1,-1){1}}
\end{picture}
\par\end{center}%
\end{minipage}}\subfloat[ERE]{\centering{}%
\begin{minipage}[t]{0.33\textwidth}%
\begin{center}
\begin{picture}(1,1)
\put(0,1){\line(1,-1){1}} \put(0,0){\line(1,1){1}}
\end{picture}
\par\end{center}%
\end{minipage}}\subfloat[ESCI]{\centering{}%
\begin{minipage}[t]{0.33\textwidth}%
\begin{center}
\begin{picture}(1,1)
\put(0,1){\line(1,-1){1}} \put(0,0){\line(1,1){1}}
\put(0,0.5){\line(1,0){1}} \put(0.5,0){\line(0,1){1}}
\end{picture}
\par\end{center}%
\end{minipage}}
\par\end{centering}

\caption{Three bivariate symmetry conditions.
\label{fig:Reflection}}

\end{figure}

There is a symmetry condition intermediate between RE and ESCI, namely
that $(X,Y)$ is both exchangeable (E) and reverse exchangeable (RE),
designated by ERE, i.e., it is symmetric about the line $y=-x$ and
the line $y=x$. See \prettyref{fig:Reflection} for a comparison
of the three symmetry conditions. All standard bivariate elliptical
distributions are ERE, while any such distribution re-centered at
any point on the line $y=-x$ except the origin satisfies RE but not
ERE.
\begin{example}
\label{exa:ERE-not-ESCI} A class of bivariate distributions that
is ERE but not ESCI arises from sampling without replacement from
a finite set $A$ of real numbers that is symmetric about 0, i.e.,
$A=-A$. If $(X,Y)$ is such a sample from any finite set $A$ with
$|A|\ge2$, then $(X,Y)$ is exchangeable since \begin{equation}
\prob[X=a,Y=b]=\frac{1}{|A|}\cdot\frac{1}{|A|-1},\ \forall\, a,b\in A,\ a\ne b.\label{sampling1}\end{equation}
 If in addition $A=-A$ then $(X,Y)$ is RE:
 \begin{align*}
\prob[-Y=a,-X=b] &\equiv \prob[-X=a,-Y=b]\\
&=\prob[X=a,Y=b]\end{align*}
 by exchangeability and symmetry. Thus $(X,Y)$ is ERE, but it is
not ESCI: for any nonzero $a\in A$, \begin{equation*}
\prob[X=-a,Y=a]=\frac{1}{|A|}\cdot\frac{1}{|A|-1}\ne0=\prob[X=a,Y=a].\label{sampling2}\end{equation*}
 \hfill{}$\square$
\end{example}

Our first result states that if $(X,Y)$ is RE, its absolute marginal
distributions are identical to those of its extreme order statistics:
\begin{thm}
\label{thm:Rev-exch-2}If $(X,Y)$ is reverse exchangeable, then \begin{equation}
|\max(X,Y)|\eqd|\min(X,Y)|\eqd|X|\eqd|Y|\label{Theoremresult}\end{equation}
\end{thm}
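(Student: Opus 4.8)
The plan is to establish the four-way equality in \eqref{Theoremresult} by splitting it into three pieces: the ``marginal'' equality $|X|\eqd|Y|$, the ``extreme'' equality $|\max(X,Y)|\eqd|\min(X,Y)|$, and a bridging equality $|\max(X,Y)|\eqd|X|$ that connects the two pairs. The first two follow directly from Definition~\ref{Def1}, whereas the third is the substantive step and is where \prettyref{pro:Rev-Exchangable} does the real work.

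First I would dispose of the two easy equalities. Since $(X,Y)\eqd(-Y,-X)$, comparing first coordinates gives $X\eqd-Y$, hence $|X|\eqd|Y|$. Likewise, applying $\max$ and $\min$ to both sides of $(X,Y)\eqd(-Y,-X)$ yields $\max(X,Y)\eqd\max(-Y,-X)=-\min(X,Y)$, so $|\max(X,Y)|\eqd|\min(X,Y)|$. It then remains only to prove $|\max(X,Y)|\eqd|X|$, after which all four absolute quantities are linked by transitivity.

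For the bridging equality I would pass to the rotated coordinates $(U,V)=\bigl(\tfrac{X+Y}{\sqrt2},\tfrac{X-Y}{\sqrt2}\bigr)$, in which \prettyref{pro:Rev-Exchangable} asserts $(U\mid V=v)\eqd(-U\mid V=v)$ for a.e.\ $v$. Inverting the rotation gives $X=\tfrac{U+V}{\sqrt2}$ together with $\max(X,Y)=\tfrac{U+|V|}{\sqrt2}$, so the claim $|\max(X,Y)|\eqd|X|$ is equivalent to $\bigl|U+|V|\bigr|\eqd|U+V|$. I would prove this conditionally on $V=v$ and then integrate out $V$. When $v\ge0$ the expressions $\bigl|U+|v|\bigr|$ and $|U+v|$ coincide identically, so their conditional laws trivially agree. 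When $v<0$, writing $v=-w$ with $w>0$ gives $\bigl|U+|v|\bigr|=|U+w|$ and $|U+v|=|U-w|$; the conditional symmetry $U\eqd-U$ given $V=v$ then yields $|U-w|\eqd|-U-w|=|U+w|$, so the conditional laws again match. Integrating over the law of $V$ produces the unconditional equality $|\max(X,Y)|\eqd|X|$.

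The main obstacle is this bridging step, and its crux is recognizing that the $\degree{45}$ rotation simultaneously (i)~converts reverse exchangeability into the clean sign-flip symmetry of $U$ given $V$ furnished by \prettyref{pro:Rev-Exchangable}, and (ii)~linearizes the order statistics into $U\pm|V|$. Once both facts are in hand the remaining work is only the elementary sign bookkeeping on $v$ above. The one point that must be handled with care is that the symmetry of $U$ is available only \emph{conditionally} on $V$, so the comparison has to be carried out fiberwise in $v$ before integrating, rather than attempted on the joint law directly.
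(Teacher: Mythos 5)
Your proposal is correct, but it follows a genuinely different route from the paper's. The paper obtains the theorem as an immediate corollary of \prettyref{pro:Rev-exch-2}: it expresses all four cdfs in terms of the five disjoint regions $N_x,S_x,E_x,W_x,C_x$ of \prettyref{fig:Plane-partition-1} (equations \eqref{FabsX}--\eqref{FabsMin}), notes that the reflection $(x,y)\mapsto(-y,-x)$ carries $N_x$ onto $W_x$ and $E_x$ onto $S_x$, so reverse exchangeability forces $\prob[N_x]=\prob[W_x]$ and $\prob[S_x]=\prob[E_x]$, and then reads off all four equalities from the identities \eqref{NminusW} and \eqref{SminusE}. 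You instead split \eqref{Theoremresult} into two immediate consequences of $(X,Y)\eqd(-Y,-X)$ (namely $|X|\eqd|Y|$ and $|\max(X,Y)|\eqd|\min(X,Y)|$) plus the bridge $|\max(X,Y)|\eqd|X|$, which you prove in rotated coordinates via the conditional symmetry of \prettyref{pro:Rev-Exchangable}, the linearization $\max(X,Y)=(U+|V|)/\sqrt{2}$, and a fiberwise-in-$v$ comparison followed by integration over the law of $V$. Both arguments exploit the same reflection symmetry, but yours packages it conditionally where the paper's is an unconditional region-matching. The paper's formulation buys two things: \prettyref{pro:Rev-exch-2} holds under the weaker one-sided conditions URE and LRE separately, and the identities \eqref{NminusW}--\eqref{SminusE} are reused verbatim in \prettyref{pro:Rev-exch-5}, where sub/super-exchangeability turns the equalities into stochastic inequalities ($\stoless$). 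Your route buys a cleaner reduction: no region bookkeeping, and the bivariate statement collapses to a one-dimensional symmetry of $U$. Note also that your case analysis contains the paper's one-sided refinement implicitly: when $v\ge 0$ (i.e.\ $X\ge Y$) the equality $|\max(X,Y)|=|X|$ holds pointwise, so your bridge uses the conditional symmetry only on one half-plane, and isolating that observation recovers statements of the type in \prettyref{pro:Rev-exch-2}. The only technical point you should state explicitly is the existence of a regular conditional distribution of $U$ given $V$ (automatic for real-valued random variables), which is what legitimizes proving the equality fiberwise and then integrating.
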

\prettyref{thm:Rev-exch-2} follows directly from \prettyref{pro:Rev-exch-2},
which holds under weaker RE conditions.
\begin{defn}
\label{Def2} The bivariate random vector $(X,Y)$ is called \emph{upper
(lower) reverse exchangeable}, designated by \emph{URE (LRE)}, if 
the conditional distributions of $U$ and $-U$ given $V=v>0\ (v<0)$ are the same, i.e.,
\begin{equation*}
(U\mid V=v) \eqd (-U\mid V=v),\quad\mathrm{for\ a.e.}\ v>0\ (v<0).
\end{equation*}
\end{defn}
Clearly RE $\implies$ URE and LRE. The converse need not be true
if \linebreak $\prob[V=0]>0$, i.e. if $\prob[X=Y]>0$, since neither URE nor
LRE ensures that $U\eqd-U\mid V=0$.
%
%
\smallskip

For any $x\ge0$, define the events (see  \prettyref{fig:Plane-partition-1})
\begin{align}
N_x:=&\ \{|X|\le x<Y\},\label{Nsubx}\\
S_x:=&\ \{|X|\le x<-Y\},\label{Ssubx}\\
E_x:=&\ \{|Y|\le x<X\},\label{Esubx}\\
W_x:=&\ \{|Y|\le x<-X\}\label{Wsubx},\\
C_x:=&\ \{|X|\le x,\,|Y|\le x\}\label{Csubx}.
\end{align}
%
%
 For any random variable $Z$, let $F_Z$ denote its cdf. Clearly
\begin{align}
F_{|X|}(x)&=\prob[N_x]+\prob[C_x]+\prob[S_x],\label{FabsX}\\
F_{|Y|}(x)&=\prob[W_x]+\prob[C_x]+\prob[E_x],\label{FabsY}\\
F_{|\max(X,Y)|}(x)&=\prob[W_x]+\prob[C_x]+\prob[S_x],\label{FabsMax}\\
F_{|\min(X,Y)|}(x)&=\prob[N_x]+\prob[C_x]+\prob[E_x].\label{FabsMin}
\end{align}
Therefore,
\begin{align}
F_{|X|}(x)-F_{|\max(X,Y)|}(x)&=\prob[N_x]-\prob[W_x]=F_{|\min(X,Y)|}(x)-F_{|Y|}(x),\label{NminusW}\\
F_{|X|}(x)-F_{|\min(X,Y)|}(x)&=\prob[S_x]-\prob[E_x]=F_{|\max(X,Y)|}(x)-F_{|Y|}(x).\label{SminusE}
\end{align}

\begin{figure}[H]
\begin{centering}
\includegraphics{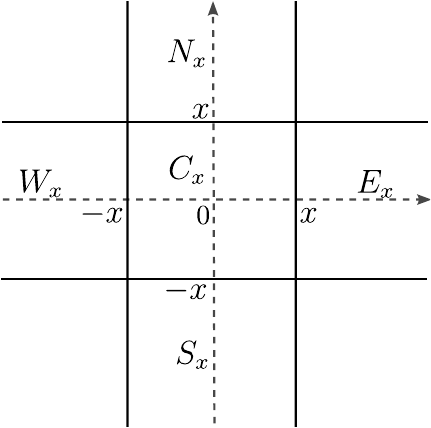}
\par\end{centering}

\caption{The union of the two closed strips $\{|X|\le x\}$ and $\{|Y|\le x\}$.
The regions $N_x$, $S_x$, $E_x$, $W_x$, and $C_x$ are disjoint.
\label{fig:Plane-partition-1}}
\end{figure}
\begin{prop}
\label{pro:Rev-exch-2}
\begin{align*}
 & (i) &  & (X,Y)\ \mathrm{URE} & \implies & |\max(X,Y)|\eqd|X|\ \mbox{and}\ |\min(X,Y)|\eqd|Y|;\\
 & (ii) &  & (X,Y)\ \mathrm{LRE} & \implies & |\max(X,Y)|\eqd|Y|\ \mbox{and}\ |\min(X,Y)|\eqd|X|;\\
 & (iii) &  & (X,Y)\ \mathrm{URE}\mbox{ and }\mathrm{LRE} & \implies &|\max(X,Y)|\eqd|\min(X,Y)|\eqd |X|\eqd|Y|.
 \end{align*}
 \end{prop}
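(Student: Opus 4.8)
The plan is to turn each claimed identity into an equality between the probabilities of two of the five regions in \eqref{Nsubx}--\eqref{Csubx}, and then to read that equality off the assumed conditional symmetry of $U$ given $V$. The reductions are already supplied by \eqref{NminusW} and \eqref{SminusE}. From \eqref{NminusW}, the two conclusions of (i), namely $|\max(X,Y)|\eqd|X|$ and $|\min(X,Y)|\eqd|Y|$, hold for all $x\ge0$ precisely when $\prob[N_x]=\prob[W_x]$ for all $x\ge0$; from \eqref{SminusE}, the two conclusions of (ii) are together equivalent to $\prob[S_x]=\prob[E_x]$ for all $x\ge0$. Part (iii) is then immediate: under both URE and LRE one gets $\prob[N_x]=\prob[W_x]$ and $\prob[S_x]=\prob[E_x]$, so all four cdfs \eqref{FabsX}--\eqref{FabsMin} coincide.

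Next I would exhibit the symmetry linking each pair. The reflection $(X,Y)\mapsto(-Y,-X)$ across the line $y=-x$ maps $N_x$ bijectively onto $W_x$ and $S_x$ onto $E_x$; this is a one-line check from the defining inequalities (e.g.\ $|X|\le x<Y$ transforms into $|Y'|\le x<-X'$). In the rotated coordinates this reflection is exactly the sign change $(U,V)\mapsto(-U,V)$, since it fixes $V=(X-Y)/\sqrt2$ and flips $U=(X+Y)/\sqrt2$. Consequently, if $A_x(v)$ denotes the $U$-section of $N_x$ at level $V=v$, then the $U$-section of $W_x$ at the same level is $-A_x(v)$, and similarly the section of $E_x$ is the reflection of that of $S_x$.

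Then I would disintegrate with respect to $V$. Each arm avoids $\{X=Y\}=\{V=0\}$ (on $N_x$, for instance, $Y>x\ge X$ forces $Y>X$), and $N_x,W_x$ lie on one side of the line $y=x$ while $S_x,E_x$ lie on the other; under the orientation fixing $y=-x$ as the $V$-axis, the pair $\{N_x,W_x\}$ occupies the open half-plane $\{V>0\}$ and $\{S_x,E_x\}$ occupies $\{V<0\}$. Writing $\prob[N_x]=\int\prob[U\in A_x(v)\mid V=v]\,dF_V(v)$ and $\prob[W_x]=\int\prob[U\in-A_x(v)\mid V=v]\,dF_V(v)$, the URE symmetry $(U\mid V=v)\eqd(-U\mid V=v)$ for a.e.\ $v>0$ gives $\prob[U\in-A_x(v)\mid V=v]=\prob[U\in A_x(v)\mid V=v]$ for a.e.\ $v$ in the region of integration (which lies in $\{v>0\}$), whence $\prob[W_x]=\prob[N_x]$. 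The identical argument on $\{V<0\}$ under LRE gives $\prob[E_x]=\prob[S_x]$. Combined with the reductions of the first paragraph, this proves (i) and (ii), and (iii) follows by putting the two together.

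The step I expect to require the most care is the bookkeeping that matches each pair of arms to the correct hypothesis, i.e.\ confirming the sign of $V$ on each arm so that $\{N_x,W_x\}$ and $\{S_x,E_x\}$ genuinely fall in the half-planes governed by URE and by LRE respectively (the orientation of the rotation sending $y=-x$ to the $V$-axis is what fixes this), together with the measure-theoretic justification of the slice-wise reflection. The latter is cleanest if one fixes a regular conditional distribution of $U$ given $V$ and notes that the arms avoid $\{V=0\}$, so that the ``a.e.\ $v>0$'' and ``a.e.\ $v<0$'' hypotheses are exactly what the relevant integrals see. Once that disintegration is in place, the conditional symmetry transfers directly to the unconditional region probabilities and the result follows.
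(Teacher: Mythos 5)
Your proposal is correct and follows essentially the same route as the paper's proof: both reduce (i)--(iii) to the arm identities $\prob[N_x]=\prob[W_x]$ and $\prob[S_x]=\prob[E_x]$ and then invoke \eqref{NminusW} and \eqref{SminusE}, the only difference being that the paper asserts those identities directly from URE/LRE while you justify them by the reflection $(x,y)\mapsto(-y,-x)$ together with disintegration over $V$. On the orientation point you flagged: placing $N_x,W_x$ in $\{V>0\}$ is what a genuine clockwise rotation gives, i.e.\ $V=(Y-X)/\sqrt{2}$; with the paper's printed formula $V=(X-Y)/\sqrt{2}$ these two arms satisfy $Y>X$ and hence lie in $\{V<0\}$, so under that sign convention the URE and LRE labels in (i) and (ii) would have to be interchanged --- this is a sign inconsistency internal to the paper (its verbal ``clockwise'' description, its displayed $V$, and its proof cannot all be reconciled), not a gap in your argument, which is correct under the convention that the paper's own proof tacitly uses.
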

\begin{proof}
Since $(X,Y)$ URE $\Rightarrow$ $\prob[N_x]=\prob[W_x]$ and $(X,Y)$ LRE $\Rightarrow$ $\prob[S_x]=\prob[E_x]$, the results follow from \eqref{NminusW} and \eqref{SminusE}
%
%
\end{proof}

\begin{example}
\label{exa:Normal-iid}If $X,Y$ are iid standard normal random variables,
then $(X,Y)$ is RE. Thus if $M=\min(X,Y)$ or $M=\max(X,Y)$, then
\prettyref{thm:Rev-exch-2} implies $|M|\eqd|X|\eqd|Y|$, hence \begin{equation}
M^{2}\eqd X^{2}\eqd Y^{2}\sim\chi_{1}^{2}.\label{chisquare}\end{equation}
 This result appeared in \citet[Exercise 5.22]{casella2002si}.\hfill{}$\square$
\end{example}
This example can be extended by relaxing normality and/or relaxing
independence:
\begin{example}
\label{exa:Sym-iid-2}If $X,Y$ are iid whose common distribution
is symmetric about 0, then clearly $(X,Y)$ is ESCI, hence RE. For
$M=\max(X,Y)$, \prettyref{thm:Rev-exch-2} implies that $|M|\eqd|X|$.
This can be verified directly from the iid assumption, as follows.

For any $x\ge0$ let $u=\prob[X>x]$. Then \[
\begin{split}\prob[|M|\le x] & =\prob[M\le x]-\prob[M<-x]\\
 & =\left(\prob[X\le x]\right)^{2}-\left(\prob[X<-x]\right)^{2}\\
 & =(1-u)^{2}-u^{2}\\
 & =(1-u)-u\\
 & =\prob[|X|\le x].\end{split}
\]
 Therefore $|M|\eqd|X|$. A similar proof holds if $M=\min(X,Y)$.\hfill{}$\square$
\end{example}

\begin{example}
\label{exa:Bivariate-Normal} (\prettyref{exa:Normal-iid} extended).
Suppose that \begin{equation*}
(X,Y)\sim N_{2}\left(\,(\mu,-\mu),\;\begin{pmatrix}1 & \rho\\
\rho & 1\end{pmatrix}\right),
\end{equation*}
 the bivariate normal distribution with means $\mu$ and $-\mu$ $(-\infty<\mu<\infty)$,
variances 1, and correlation $\rho\in(-1,1)$. Then $(X,Y)$ is not ESCI but
is RE, so \prettyref{thm:Rev-exch-2} implies that for $M=\max(X,Y)$
or $\min(X,Y)$, \begin{equation}
M^{2}\eqd X^{2}\eqd Y^{2}\sim\chi_{1}^{2}(\mu^{2}),\label{noncentralchisquare}\end{equation}
 the noncentral chisquare distribution with noncentrality parameter
$\mu^{2}$, extending \eqref{chisquare}. Note that this result does
not depend on the value of $\rho$.

For $\mu=0$, \eqref{noncentralchisquare} reduces to \eqref{chisquare} as in \prettyref{exa:Normal-iid}, and is equivalent to \eqref{introeq1}.  It seems difficult to verify \eqref{introeq1} directly in this case. \hfill{}$\square$
\end{example}
%
{}
\begin{example}
\label{exa:Bivariate-Elliptical} (\prettyref{exa:Bivariate-Normal}
extended). Suppose that $(X,Y)$ has a bivariate elliptical pdf on
$\mathbb{R}^{2}$ given by
\begin{equation*}
f(x,y)=|\Sigma|^{-1/2}g\left[(x-\mu,\, y+\mu)\,\Sigma^{-1}(x-\mu,\, y+\mu)'\right],
\end{equation*}
where $\Sigma=\sigma^{2}\begin{pmatrix}1 & \rho\\
\rho & 1\end{pmatrix}$.  Then $(X,Y)$ is RE so $M^{2}\eqd X^{2}\eqd Y^{2}$ for all
$\rho\in(-1,1)$. \hfill{}$\square$
\end{example}

\begin{example}
\label{exa:Draw-without-replacement-2}(\prettyref{exa:ERE-not-ESCI}
continued). Suppose that $X,Y$ represent two random draws without
replacement from a finite set $A$ of real numbers that is symmetric about 0, i.e.,
$A=-A$. As
noted in \eqref{sampling1}, $(X,Y)$ is RE, so $|\max(X,Y)|\eqd|X|$
by \prettyref{thm:Rev-exch-2}. If $0\not\in A$ then
\begin{equation*}
\prob[|X|=a]=\frac{2}{|A|},\ \mbox{for}\ a\in A,\ a>0,
\end{equation*}
 while if $0\in A$ then \begin{equation*}
\prob[|X|=a]=\begin{cases}
\frac{1}{|A|}, & a=0,\\
\frac{2}{|A|}, & a\in A,\ a>0,\end{cases}\end{equation*}
 so these are the distributions of $|\max(X,Y)|$ (and of $|\min(X,Y)|$)
as well.\hfill{}$\square$
\end{example}
There is an obvious relation between bivariate RE and bivariate E:
\begin{prop}
$(X,Y)$ is reverse exchangeable if and only if $(X,-Y)$ is exchangeable.
\end{prop}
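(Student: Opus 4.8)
The plan is to recognize both conditions as instances of a single distributional equality, related by the coordinate reflection $T(u,v):=(u,-v)$. Since $T$ is a measurable bijection of $\mathbb{R}^2$ (indeed a linear involution, so $T=T^{-1}$), for any pair of random vectors $W,Z$ one has $W\eqd Z$ if and only if $T(W)\eqd T(Z)$: the forward implication holds for any measurable map, and the converse follows by applying $T^{-1}$ to both sides. I would use this equivalence in both directions.

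First I would spell out the two conditions explicitly. By Definition \ref{Def1}, $(X,Y)$ is reverse exchangeable exactly when $(X,Y)\eqd(-Y,-X)$. On the other hand, $(X,-Y)$ is exchangeable exactly when its two coordinates may be swapped without changing the joint law, i.e. when $(X,-Y)\eqd(-Y,X)$.

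Next I would apply $T$ to the RE identity. Computing $T(X,Y)=(X,-Y)$ and $T(-Y,-X)=(-Y,X)$, applying $T$ to both sides of $(X,Y)\eqd(-Y,-X)$ yields precisely $(X,-Y)\eqd(-Y,X)$, which is the exchangeability of $(X,-Y)$. Because $T$ is its own inverse, the same computation run in reverse recovers the RE identity from exchangeability of $(X,-Y)$, so the two conditions are equivalent.

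The argument is essentially bookkeeping, so there is no substantive obstacle; the only point requiring care is tracking which coordinate the minus sign attaches to after the reflection, so that the swapped vector $(-Y,X)$ is correctly identified with the exchanged version of $(X,-Y)$ rather than with some other reflection of the original pair.
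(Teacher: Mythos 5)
Your proof is correct. The paper states this proposition without any proof, treating it as immediate from Definition \ref{Def1}, and your argument---applying the measurable involution $T(u,v)=(u,-v)$ to the identity $(X,Y)\eqd(-Y,-X)$ to obtain $(X,-Y)\eqd(-Y,X)$ and using $T=T^{-1}$ for the converse---is precisely the routine verification that was left implicit.
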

\noindent Thus \prettyref{thm:Rev-exch-2} has the following corollary:
\begin{cor}
If $(X,Y)$ is exchangeable then \begin{equation*}
|\max(X,-Y)|\eqd|\min(X,-Y)|\eqd|X|\eqd|Y|.\end{equation*}
 \end{cor}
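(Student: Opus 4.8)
The plan is to deduce the corollary from \prettyref{thm:Rev-exch-2} by applying that theorem not to $(X,Y)$ itself but to the sign-flipped pair $(X,-Y)$. The preceding proposition supplies exactly the bridge needed, since it identifies exchangeability of one pair with reverse exchangeability of its reflection.

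First I would set $(A,B):=(X,-Y)$ and verify that $(A,B)$ is reverse exchangeable. Applying the preceding proposition with $(A,B)$ in the role of $(X,Y)$, the pair $(A,B)=(X,-Y)$ is RE if and only if $(A,-B)=(X,Y)$ is exchangeable. Since $(X,Y)$ is exchangeable by hypothesis, it follows at once that $(X,-Y)$ is RE.

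Next I would invoke \prettyref{thm:Rev-exch-2} with $(X,-Y)$ playing the role of $(X,Y)$, which yields
\begin{equation*}
|\max(X,-Y)|\eqd|\min(X,-Y)|\eqd|X|\eqd|{-Y}|.
\end{equation*}
Since $|{-Y}|=|Y|$, the final term is just $|Y|$, and this chain of equalities in distribution is precisely the assertion of the corollary.

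There is no genuine obstacle here: the corollary is a one-line composition of the proposition and \prettyref{thm:Rev-exch-2}. The only point requiring care is the bookkeeping with signs when substituting $(X,-Y)$ into both results, and the one modest idea is simply to recognize that the theorem should be applied to the reflected pair rather than to $(X,Y)$ directly.
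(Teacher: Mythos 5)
Your proposal is correct and follows exactly the route the paper intends: use the preceding proposition (with the roles adjusted so that exchangeability of $(X,Y)$ gives reverse exchangeability of $(X,-Y)$) and then apply \prettyref{thm:Rev-exch-2} to the pair $(X,-Y)$, noting $|{-Y}|=|Y|$. The sign bookkeeping in your substitution is handled correctly, so nothing is missing.
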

\begin{example}
\label{counterexample} Sign-change invariance is not sufficient for
the conclusion of \prettyref{thm:Rev-exch-2} to hold. Suppose that
$(X,Y)=(1,0)$ and $(-1,0)$, each with probability 1/2. Then $|X|\equiv1$
while $|\max(X,Y)|=0$ and $1$ each with probability 1/2, so \eqref{Theoremresult}
fails, even though $(X,Y)$ is SCI. \hfill{}$\square$
\end{example}

\section{Reverse Exchangeability for Multivariate Distributions}

It is natural to ask if \prettyref{thm:Rev-exch-2} extends to three
or more variables. That is, is \begin{equation*}
|\max(X_{1},\ldots,X_{n-1})|\eqd|\max(X_{1},\ldots,X_{n})|\end{equation*}
 for $n\geq3$ under a general symmetry condition?

The short answer to this question is ``no'', as seen
by the following simple example:
\begin{example}
\label{exa:coordinateaxes} Consider the random vector $\bX_{n}\equiv(X_{1},\dots,X_{n})$
with (discrete) probability distribution specified by \begin{equation*}
\prob[\bX_{n}=\be_{i}]=\prob[\bX_{n}=-\be_{i}]=\frac{1}{2n},\quad i=1,\dots,n,
\end{equation*}
 where $\be_{i}$ denotes the $i$th coordinate unit vector $(0,\dots,0,1_{i},0,\dots,0)$
in $\mathbb{R}^{n}$. Clearly $\bX_{n}$ satisfies the strong symmetry
condition ESCI. However, \begin{equation*}
\prob[|X_{1}|=j]=\begin{cases}
1-\frac{1}{n}, & j=0,\\
\frac{1}{n}, & j=1,\end{cases}\end{equation*}
 while for $l=2,\dots,n$, \begin{equation*}
\prob[|\max(X_{1},\dots,X_{l})|=j]=\begin{cases}
1-\frac{l}{2n}, & j=0,\\
\frac{l}{2n}, & j=1,\end{cases}\end{equation*}
 so that \begin{equation}
|X_{1}|\eqd|\max(X_{1},X_{2})|\stolesstr|\max(X_{1},X_{2},X_{3})|\stolesstr\cdots\stolesstr|\max(X_{1},\ldots,X_{n})|.\label{stochstrict}\end{equation}
 Here $U\stolesstr V$ indicates that $U$ is strictly stochastically
less than $V$, that is, $F_{U}(x)\ge F_{V}(x)$ for all $x$ with
strict inequality for at least one $x$. \hfill{}$\square$
\end{example}
This example shows that \prettyref{thm:Rev-exch-2} does not extend
to three or more dimensions. However, we shall show in Theorems \ref{thm:Rev-exch-n},
\ref{thm:subsupRev-exch-n}, and \ref{thm:Rev-exch-n3} that stochastic
inequalities like those in \eqref{stochstrict} do hold under multivariate
extensions of reverse exchangeability.
\begin{defn}
\label{incrabsmaxmin} The random vector or sequence $(X_{1},\dots,X_{n})$
($n\le\infty$) is said to be \emph{stochastically increasing in absolute
maximum (= SIAMX)} if \begin{equation}
|\max(X_{1},\cdots,X_{l-1})|\stoless|\max(X_{1},\ldots,X_{l})|,\quad\mathrm{for}\ l=2,\dots,n,\label{SIAMX}\end{equation}
 where $U\stoless V$ means that $U$ is stochastically less than
$V$, i.e. $F_{U}(x)\ge F_{V}(x)$ for all $x$. It is \emph{stochastically
increasing in absolute minimum (= SIAMN)} if \eqref{SIAMX} holds
with max replaced by min. It is \emph{strictly SIAMX (= SSIAMX)} or
\emph{strictly SIAMN (= SSIAMN)} if the stochastic inequalities are
strict. It is designated \emph{SIAMX{*} or SSIAMX{*}} if the stochastic
inequalities hold for $l=3,\dots,n$ but for $l=2$ the stochastic
inequality is replaced by $|X_{1}|\eqd|\max(X_{1},X_{2})|$ (e.g.
see \eqref{stochstrict}). It is designated as \emph{SIAMN{*} or SSIAMN{*}}
if, similarly,\linebreak $|X_{1}|\eqd|\min(X_{1},X_{2})|$.
\end{defn}

\begin{defn}
\label{Def3} The random vector $(X_{1},\dots,X_{n})$ is said to
be \emph{RE$(k,l)$} for indices $1\le k<l\le n$ if its distribution is unchanged when $(X_k, X_l)$ is replaced by $(-X_l,-X_k)$, i.e.,
\begin{equation}
(X_{1},\ldots,X_{k},\ldots,X_{l},\dots,X_{n})\eqd(X_{1},\ldots,-X_{l},\ldots,-X_{k},\dots,X_{n}).\label{defhigherRE}
\end{equation}
 Also, $(X_{1},\dots,X_{n})$ is called \emph{RE$(n)$} if it is RE$(k,n)$
for some $k<n$. \end{defn}
\begin{prop}
\label{pro:Rev-exch-n} (i) If $(X_{1},\dots,X_{n})$ is
RE$(k,l)$ then for $m=k$ and for $m=l$, 
\begin{align}
|\max(X_{i}\mid1\le i\le n,\; i\ne m)| & \stoless|\max(X_{i}\mid1\le i\le n)|,\label{REnmax}\\
|\min(X_{i}\mid1\le i\le n,\; i\ne m)| & \stoless|\min(X_{i}\mid1\le i\le n)|.\label{REnmin}
\end{align}
(ii) Strict stochastic inequality holds in \eqref{REnmax}, respectively, in \eqref{REnmin}, if
\begin{align}
\prob[X_{m}>\max(|X_{i}|\mid1\le i\le n,\; i\ne m)]&> 0,\ \mbox{respectively,}\label{strictineqmaxm}\\
\prob[X_{m}<-\max(|X_{i}|\mid1\le i\le n,\; i\ne m)]&> 0.\label{strictineqminm}\end{align}
 \end{prop}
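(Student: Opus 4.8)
The plan is to reduce each stochastic inequality to the pointwise nonnegativity of one explicit difference of cdf's, and to read off strictness from the same expression. Fix $m=k$ and write $M=\max(X_i\mid 1\le i\le n)$ and $M^{(k)}=\max(X_i\mid i\ne k)$; I want $F_{|M^{(k)}|}(x)\ge F_{|M|}(x)$ for all $x\ge 0$. First I would record the elementary identity $\{|\max_i Y_i|\le x\}=\{\forall i:\,Y_i\le x\}\setminus\{\forall i:\,Y_i<-x\}$, valid for $x\ge0$, which gives $F_{|M|}(x)=\prob[\forall i:\,X_i\le x]-\prob[\forall i:\,X_i<-x]$ and the analogous formula for $F_{|M^{(k)}|}(x)$ with $i$ ranging over $i\ne k$. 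Subtracting, the two ``$\le x$'' terms differ by $A_x:=\prob[X_k>x,\ X_i\le x\ \forall i\ne k]$ and the two ``$<-x$'' terms differ by $B_x:=\prob[X_k\ge-x,\ X_i<-x\ \forall i\ne k]$, so that $F_{|M^{(k)}|}(x)-F_{|M|}(x)=A_x-B_x$.

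The crux is to compare $A_x$ and $B_x$ using RE$(k,l)$. Here I would single out the partner index $l$ and set $R:=\max(X_i\mid i\ne k,\,l)$, the maximum over the remaining coordinates. Then $A_x=\prob[X_k>x,\,X_l\le x,\,R\le x]$, while applying the distributional identity \eqref{defhigherRE} --- i.e.\ replacing $(X_k,X_l)$ by $(-X_l,-X_k)$ inside the event defining $B_x$ --- turns $B_x=\prob[X_k\ge-x,\,X_l<-x,\,R<-x]$ into $\prob[X_k>x,\,X_l\le x,\,R<-x]$. Since $\{R<-x\}\subseteq\{R\le x\}$ for $x\ge0$, subtracting gives the clean formula
\begin{equation*}
F_{|M^{(k)}|}(x)-F_{|M|}(x)=\prob[\,X_k>x,\ X_l\le x,\ |R|\le x\,]\ \ge\ 0,
\end{equation*}
which is \eqref{REnmax} for $m=k$. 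The case $m=l$ is identical after interchanging the roles of $k$ and $l$ (the reflection $(X_k,X_l)\mapsto(-X_l,-X_k)$ is symmetric in the two indices), and the two minimum statements \eqref{REnmin} follow by applying the maximum results to $-\bX_n$, which is again RE$(k,l)$ (negate \eqref{defhigherRE}) and satisfies $|\min(X_i\mid\cdot)|=|\max(-X_i\mid\cdot)|$.

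For part (ii) I would extract strictness from the very same formula. Writing $h(x)$ for the nonnegative difference above, Tonelli's theorem gives $\int_0^\infty h(x)\,dx=\mathbb{E}\big[(X_k-\max(|R|,X_l,0))^+\big]$, since for each outcome the integrand $\mathbf{1}\{X_k>x,\,X_l\le x,\,|R|\le x\}$, as a function of $x\ge0$, is the indicator of the interval $[\max(|R|,X_l,0),X_k)$. On the event $\{X_k>\max(|X_i|\mid i\ne k)\}$ appearing in \eqref{strictineqmaxm} one has $X_k>\max(|X_i|\mid i\ne k)\ge\max(|R|,|X_l|)\ge\max(|R|,X_l,0)$, where the middle inequality $\max(|X_i|\mid i\ne k)\ge|R|$ uses that there is at least one coordinate outside $\{k,l\}$; hence the random variable $(X_k-\max(|R|,X_l,0))^+$ is strictly positive on that event, which has positive probability, so $\int_0^\infty h\,dx>0$. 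As $h\ge0$ with positive integral, $\{h>0\}$ has positive Lebesgue measure and in particular is nonempty, so $h(x_0)>0$ for some $x_0\ge0$; combined with part (i) this yields $\stolesstr$ in \eqref{REnmax}. The strict minimum statement follows by the same $-\bX_n$ reduction, under which \eqref{strictineqminm} becomes \eqref{strictineqmaxm} for $-\bX_n$.

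I expect the main obstacle to be the bookkeeping in the reflection step: one must single out $l$, isolate the ``rest'' maximum $R$, and verify that the RE$(k,l)$ substitution carries the $B_x$ event precisely onto $\{X_k>x,\,X_l\le x,\,R<-x\}$, so that the two terms telescope to a manifestly nonnegative probability. A secondary point of care is the passage in (ii) from positive probability of the event in \eqref{strictineqmaxm} --- which corresponds to a whole interval of $x$'s, not a single value --- to a strict cdf gap at one fixed $x_0$; the Tonelli computation is what makes this passage rigorous and also exhibits why a third coordinate is needed (when $n=2$ one has $R\equiv-\infty$, $h\equiv0$, and part (i) degenerates to the equality of \prettyref{thm:Rev-exch-2}).
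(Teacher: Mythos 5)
Your proof is correct, and it reaches the result by a genuinely leaner route than the paper's, so a comparison is worthwhile. The paper (after reducing WLOG to $(k,l)=(1,n)$, $m=n$) partitions $\{|\max(X_{1},\dots,X_{n})|\le x\}$ into the disjoint family $\mathcal{T}_{n}(D)$ indexed by subsets $D\subset N=\{1,\dots,n\}$, decomposes $\{|\max(X_{1},\dots,X_{n-1})|\le x\}$ likewise with the auxiliary events $\widetilde{\mathcal{T}}_{n}(D)$, and observes that the two unions agree except that one contains $\mathcal{T}_{n}(N\backslash\{n\})$ where the other contains $\bigcup_{D}\widetilde{\mathcal{T}}_{n}(D)$; the RE swap carries $\mathcal{T}_{n}(N\backslash\{n\})$ exactly onto one member of that union, namely $\widetilde{\mathcal{T}}_{n}(N\backslash\{1,n\})$, which gives \eqref{eq:QnQn-1}. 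You avoid the subset decomposition altogether: the identity $F_{|\max|}(x)=\prob[\mathrm{all}\le x]-\prob[\mathrm{all}<-x]$ plus one telescoping subtraction reduces everything to $A_{x}-B_{x}$, and a single application of \eqref{defhigherRE} turns the event of $B_{x}$ into a subevent of that of $A_{x}$, yielding the closed-form gap $h(x)=\prob[X_{k}>x,\,X_{l}\le x,\,|R|\le x]\ge0$. Both proofs hinge on the same mechanism --- reflect the event in which all non-swapped coordinates lie below $-x$ --- but your bookkeeping is lighter and produces an explicit formula for the cdf difference rather than an inequality between unions. That formula pays off in part (ii): your Tonelli computation replaces the paper's countable-union-over-rational-$x$ argument (its footnote) as the device for converting the positive-probability condition \eqref{strictineqmaxm} into a strict gap at a single $x_{0}$ (either device works, since $h$ is a difference of cdfs and hence right-continuous); moreover it makes visible that part (ii) genuinely requires a coordinate outside $\{k,l\}$ --- your observation that $n=2$ forces $h\equiv0$ --- a restriction that is only implicit in the paper's step ``set $D=\emptyset$,'' which is legitimate only when $\emptyset\ne N\backslash\{1,n\}$, i.e.\ $n\ge3$ (and indeed for $n=2$ strictness must fail, since \prettyref{thm:Rev-exch-2} forces equality there). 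Your two reductions --- handling $m=l$ by the $k\leftrightarrow l$ symmetry of the swap, and the minimum statements by negating the vector, which preserves RE$(k,l)$ --- are exactly the paper's own reductions \eqref{XiffminusX}--\eqref{absmaxabsmin} and are fully justified.
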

\begin{proof}
(i) Without loss of generality take $(k,l)=(1,n)$ and $m=l=n$, so \eqref{defhigherRE},
\eqref{REnmax}, and \eqref{REnmin} become \begin{align}
(X_{1},X_{2}\dots,X_{n-1},X_{n}) & \eqd(-X_{n},X_{2}\dots,X_{n-1},-X_{1}),\label{defhigherRE1}\\
|\max(X_{1},\ldots,X_{n-1})| & \stoless|\max(X_{1},\ldots,X_{n})|,\label{REnmax1}\\
|\min(X_{1},\ldots,X_{n-1})| & \stoless|\min(X_{1},\ldots,X_{n})|,\label{REnmin1}\end{align}
 respectively. For $x\ge0$ define the event \begin{equation*}
\Omega_{n}(x):=  \left\{ |\max(X_{1},\ldots,X_{n})|\le x\right\}\\ =\left\{ -x\le\max(X_{1},\ldots,X_{n})\le x\right\} .\label{aa}\end{equation*}
 To prove \eqref{REnmax1} we need to show that \begin{equation}
\prob[\Omega_{n}(x)]\le\prob[\Omega_{n-1}(x)].\label{eq:QnQn-1}\end{equation}

For any subset $D\subseteq N:=\{1,\ldots,n\}$, define the event \begin{equation*}
\mathcal{T}_{n}(D)\equiv\mathcal{T}_{n}(D;x):=\{X_{i}<-x\ \forall i\in D\}\cap\{|X_{i}|\le x\ \forall i\notin D\}.\end{equation*}
 Note that the events $\mathcal{T}_{n}(D)$ are disjoint for $D\subseteq N$.
Then \begin{eqnarray}
\Omega_{n}(x) & = & \bigcup_{D\subset N}\mathcal{T}_{n}(D)\nonumber \\
 & = & \left(\bigcup_{D\subset N,\, n\in D}\mathcal{T}_{n}(D)\right)\cup\left(\bigcup_{D\subset N,\, n\notin D}\mathcal{T}_{n}(D)\right)\nonumber \\
 & = & \left(\bigcup_{D\subset N\backslash\{n\}}\mathcal{T}_{n}(D\cup\{n\})\right)\cup\left(\bigcup_{D\subseteq N\backslash\{n\}}\mathcal{T}_{n}(D)\right)\nonumber \\
 & = & \left(\bigcup_{D\subset N\backslash\{n\}}\Big(\mathcal{T}_{n}(D\cup\{n\})\cup\mathcal{T}_{n}(D)\Big)\right)\cup\Big(\mathcal{T}_{n}(N\backslash\{n\})\Big).\label{eq:Qn}\end{eqnarray}
 For any $D\subseteq N\backslash\{n\}$ define \begin{equation*}
\widetilde{\mathcal{T}}_n(D)\equiv\widetilde{\mathcal{T}}_n(D;x):=\{X_{i}<-x\ \forall i\in D\}\cap\{|X_{i}|\le x\ \forall i\notin D,\, i\neq n\}\cap\{X_{n}>x\},\end{equation*}
 also a family of disjoint events. Note too that $\mathcal{T}_{n}(D)\cap\widetilde{\mathcal{T}}_n(D')=\emptyset$
for any $D,D'$. If $D\subset N\backslash\{n\}$, it is straightforward
to verify that \begin{equation*}
\mathcal{T}_{n-1}(D)=\mathcal{T}_{n}(D\cup\{n\})\cup\mathcal{T}_{n}(D)\cup\widetilde{\mathcal{T}}_n(D),\end{equation*}
 a union of three disjoint events. Thus \begin{eqnarray}
& & \Omega_{n-1}(x) \nonumber \\
 & = & \bigcup_{D\subset N\backslash\{n\}}\mathcal{T}_{n-1}(D)\nonumber \\
 & = & \bigcup_{D\subset N\backslash\{n\}}\Big(\mathcal{T}_{n}(D\cup\{n\})\cup\mathcal{T}_{n}(D)\cup\widetilde{\mathcal{T}}_n(D)\Big)\nonumber \\
 & = & \left(\bigcup_{D\subset N\backslash\{n\}}\Big(\mathcal{T}_{n}(D\cup\{n\})\cup\mathcal{T}_{n}(D)\Big)\right)\cup\left(\bigcup_{D\subset N\backslash\{n\}}\widetilde{\mathcal{T}}_n(D)\right),\label{eq:Qn-1}\end{eqnarray}
 where all the events involving $\mathcal{T}_{n}$ and $\widetilde{\mathcal{T}}_n$
are mutually disjoint. But the RE$(1,n)$ condition \eqref{defhigherRE}
implies that \begin{equation}
\prob[\mathcal{T}_{n}(N\backslash\{n\})]=\prob[\widetilde{\mathcal{T}}_n(N\backslash\{1,n\})]\leq\prob\left[\bigcup_{D\subset N\backslash\{n\}}\widetilde{\mathcal{T}}_n(D)\right],\label{strictineqTtilde}\end{equation}
 which, together with \eqref{eq:Qn} and \eqref{eq:Qn-1}, yields
\eqref{eq:QnQn-1} and thence \eqref{REnmax1}. \medskip{}

Now \eqref{REnmin1} follows from \eqref{REnmax1} because \begin{align}
(X_{1},\dots,X_{n})\ \mathrm{is\ RE}(k,l) & \iff(-X_{1},\dots,-X_{n})\ \mathrm{is\ RE}(k,l)\quad\mathrm{and}\label{XiffminusX}\\
|\min(X_{1},\ldots,X_{n})| & \ \ =\ \ \ |\max(-X_{1},\ldots,-X_{n})|.\label{absmaxabsmin}\end{align}

(ii) Because the events $\widetilde{\mathcal{T}}_n(D)$
are disjoint, it follows from \eqref{strictineqTtilde} that strict
inequality holds in \eqref{eq:QnQn-1} iff \begin{equation}
\prob\left[\widetilde{\mathcal{T}}_n(D;x)\right]>0,\ \ \mathrm{for\ some}\ D\subset N\backslash\{n\},\, D\ne N\backslash\{1,n\}.\label{strictineqTtildeD}\end{equation}
 In particular, set $D=\emptyset$ to see that \eqref{strictineqTtildeD}
holds if
\begin{equation}
\prob[|X_{i}|\le x,\ \forall\ i=1,\dots,n-1,\; X_{n}>x]>0.\label{strictineqemptyset}
\end{equation}
Thus a sufficient condition for strict stochastic inequality to hold
in \eqref{REnmax1} is that \eqref{strictineqemptyset} hold for at
least one $x$, which is equivalent\footnote{Since $\{X_{n}>\max(|X_{1}|,\dots,|X_{n-1}|)\} =\cup(\{X_n>x\ge \max(|X_{1}|,\dots,|X_{n-1}|)\}\mid x\in\mathbb{Q}
)$.}
to the condition that
\begin{equation*}
\prob[X_{n}>\max(|X_{1}|,\dots,|X_{n-1}|)]>0,
\end{equation*}
thus confirming \eqref{strictineqmaxm}.
 By \eqref{absmaxabsmin}, it follows that a sufficient condition
for strict stochastic inequality to hold in \eqref{REnmin1} is that
\begin{equation*}
\prob[X_{n}<-\max(|X_{1}|,\dots,|X_{n-1}|)]>0,
\end{equation*}
thereby confirming \eqref{strictineqminm}
 \end{proof}
\begin{rem}
\label{counterexample2} The distribution of $|\max(X_{i}\mid1\le i\le n,\; i\ne m)|$
in \prettyref{pro:Rev-exch-n} is not necessarily the same for $m=k$
and $m=l$. With $n=3$, $k=1$, and $l=2$, consider the random vector
$(X_{1},X_{2},X_{3})$ that assigns probability 1/4 to each of the
four points $(-1,0,0)$, $(0,1,0)$, $(0,-1,1)$, and $(1,0,1)$.
Then this distribution is RE$(1,2)$ but \begin{align*}
\prob[|\max(X_{1},X_{3})|=r]= & \begin{cases}
\frac{1}{2}, & r=0\\
\frac{1}{2}, & r=1,\end{cases}\\
\prob[|\max(X_{2},X_{3})|=r]= & \begin{cases}
\frac{1}{4}, & r=0\\
\frac{3}{4}, & r=1.\end{cases}\end{align*}
 The same is true for $|\min(X_{i}\mid1\le i\le n,\; i\ne m)|$. \hfill{}$\square$
\end{rem}
\prettyref{thm:Rev-exch-2} and \prettyref{pro:Rev-exch-n} yield
the following multivariate result:
\begin{thm}
\label{thm:Rev-exch-n}Let the random vector or sequence $(X_{1},\dots,X_{n})$
be such that $(X_{1},\dots,X_{l})$ is RE$(l)$ for each $l=2,\dots,n$.
Then $(X_{1},\dots,X_{n})$ is SIAMX{*} and SIAMN{*}. It is SSIAMX{*}
or SSIAMN{*} if \begin{align}
\prob[X_{l}>\max(|X_{1}|,\dots,|X_{l-1}|)]>0, & \quad l=3,\dots,n,\label{suffcondstrictineq1ell}\\
\mathrm{or}\quad\prob[X_{l}<-\max(|X_{1}|,\dots,|X_{l-1}|)]>0, & \quad l=3,\dots,n,\label{suffcondstrictineq2ell}\end{align}
 respectively.
\end{thm}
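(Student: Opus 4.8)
The plan is to read the theorem off from the two results already in hand: \prettyref{thm:Rev-exch-2} supplies the base case $l=2$, while \prettyref{pro:Rev-exch-n} supplies the stochastic inequalities for $l=3,\dots,n$. The only real work is to match indices correctly.

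First I would treat $l=2$. Since $(X_1,X_2)$ is RE$(2)$, Definition \ref{Def3} forces it to be RE$(k,2)$ for some $k<2$, i.e.\ RE$(1,2)$; unwinding \eqref{defhigherRE} this says $(X_1,X_2)\eqd(-X_2,-X_1)$, which is exactly bivariate reverse exchangeability in the sense of Definition \ref{Def1}. Hence \prettyref{thm:Rev-exch-2} applies and gives $|\max(X_1,X_2)|\eqd|X_1|$ and $|\min(X_1,X_2)|\eqd|X_1|$. These are precisely the distributional equalities demanded at $l=2$ by the starred conditions SIAMX{*} and SIAMN{*}.

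Next, for each fixed $l\in\{3,\dots,n\}$ I would apply \prettyref{pro:Rev-exch-n} to the sub-vector $(X_1,\dots,X_l)$, i.e.\ with the role of ``$n$'' in that proposition played by $l$. By hypothesis this sub-vector is RE$(l)$, hence RE$(k,l)$ for some $k<l$, so the index $l$ is one of the two distinguished indices and $m=l$ is an admissible choice in \prettyref{pro:Rev-exch-n}. With $m=l$, deleting the top index recovers exactly the $(l-1)$-fold maximum, so \eqref{REnmax} becomes
\begin{equation*}
|\max(X_1,\dots,X_{l-1})|\stoless|\max(X_1,\dots,X_l)|,
\end{equation*}
which is the SIAMX inequality \eqref{SIAMX} for this $l$; likewise \eqref{REnmin} yields the companion min inequality. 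Combined with the base case, this establishes SIAMX{*} and SIAMN{*}.

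Finally, the strict statements come from part (ii) of \prettyref{pro:Rev-exch-n}. With $m=l$ the sufficient condition \eqref{strictineqmaxm} reads $\prob[X_l>\max(|X_1|,\dots,|X_{l-1}|)]>0$, which is precisely \eqref{suffcondstrictineq1ell}; if this holds for every $l=3,\dots,n$ then every inequality in the max chain is strict, giving SSIAMX{*}. Symmetrically \eqref{strictineqminm} becomes \eqref{suffcondstrictineq2ell} and yields SSIAMN{*}. There is no serious obstacle here, as the argument is a bookkeeping assembly of the earlier lemmas; the only points needing care are the reindexing when \prettyref{pro:Rev-exch-n} is applied to sub-vectors of varying length and the verification that RE$(l)$ makes $m=l$ admissible, so that the deleted coordinate is exactly the newly added $X_l$.
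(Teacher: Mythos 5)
Your proposal is correct and is exactly the paper's own argument: the paper proves this theorem simply by the remark that \prettyref{thm:Rev-exch-2} (handling the $l=2$ equality in the starred conditions) and \prettyref{pro:Rev-exch-n} with $m=l$ applied to each sub-vector $(X_1,\dots,X_l)$ (handling $l=3,\dots,n$ and, via part (ii), the strictness conditions) together yield the result. Your write-up just makes the index bookkeeping explicit, which matches the intended proof.
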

It is easy to see that the discrete multivariate distribution in \prettyref{exa:coordinateaxes}
satisfies condition \eqref{suffcondstrictineq1ell}, thereby confirming
the strict stochastic inequalities in \eqref{stochstrict}. (The same
holds true if max is replaced by min in \eqref{stochstrict}.)
\begin{rem}
For $(X_{1},X_{2})$, RE$(2)$ is simply RE, which is weaker than
ESCI as noted before. For $(X_{1},\dots,X_{n})$ with $n\ge3$, the
conjunction of RE$(2)$, \dots, RE$(n)$ in \prettyref{thm:Rev-exch-n}
is weaker than ESCI in general. Consider, for example, an infinite
sequence $X_{1},X_{2},X_{3},\ldots$ of iid but non-symmetric rvs (random variables).
For any $n\ge2$, \linebreak[1] $(-X_{1},X_{2},X_{3},\ldots,X_{n})$ is RE$(n)$
but not ESCI.\hfill{}$\square$ \end{rem}
\begin{example}
{[}\prettyref{exa:Sym-iid-2} continued{]} \label{exa:Sym-iid-n}If
$X_{1},...,X_{n}$ are iid random variables whose common distribution
is symmetric about 0, then $(X_{1},...,X_{n})$ is ESCI hence RE$(n)$
for every $n\ge2$. Here the conclusions of \prettyref{thm:Rev-exch-n}
can be verified directly: \smallskip{}
 To show that $(X_{1},\dots,X_{n})$ is SIAMX{*}, for any $x\ge0$
set $u_{x}=\prob[X_{i}>x]\le\frac{1}{2}$. Then as in \prettyref{exa:Sym-iid-2},
\begin{equation*}
\prob[\,|\max(X_{1},...,X_{n})|\le x]=(1-u_{x})^{n}-u_{x}^{n},\end{equation*}
 which is decreasing in $n$ since \begin{eqnarray}
(1-u_{x})^{n-1}-u_{x}^{n-1} & \ge & (1-u_{x})^{n}-u_{x}^{n}\nonumber \\
 & \Updownarrow\nonumber\\
(1-u_{x})^{n-1}u_{x} & \ge & u_{x}^{n-1}(1-u_{x})\nonumber \\
 & \Updownarrow\nonumber\\
\left(\frac{1-u_{x}}{u_{x}}\right)^{n-1} & \ge & \frac{1-u_{x}}{u_{x}}.\nonumber
\end{eqnarray}
 The last  inequality
 holds since $\frac{1-u_{x}}{u_{x}}\ge1$.

This inequality is strict if $n\ge3$ and $0<u_{x}<\frac{1}{2}$,
i.e., if $\prob[|X_{i}|\le x]>0$. Thus for such $x$, $\prob[\,|\max(X_{1},...,X_{n})|\le x]$
is strictly decreasing in $n$ for $n\ge2$. A necessary and sufficient
condition for this to hold for at least one $x\ge0$, and therefore
for $(X_{1},\dots,X_{n})$ to be SSIAMX{*}, is that the distribution of
$|X_{i}|$ be non-degenerate. Note that this condition is equivalent
to both \eqref{suffcondstrictineq1ell} and \eqref{suffcondstrictineq2ell}
in this example, so under this condition,
$(X_{1},\dots,X_{n})$ is SSIAMN{*} as well. \hfill{}$\square$
\end{example}
For independent random variables, however, the requirements of identical
distributions and symmetry in \prettyref{exa:Sym-iid-n} are not necessary
for RE$(n)$ to hold:
\begin{example}
\label{exa:Alternate-series}Let $(X_{1},X_{2},\ldots)$ be an infinite
sequence of independent random variables such that \begin{equation}
X_{1}\eqd-X_{2}\eqd X_{3}\eqd-X_{4}\eqd X_{5}\eqd\cdots\ .\label{eq:Alternate-series}\end{equation}
 Then for each $n\ge2$, $(X_{1},...,X_{n})$ is RE$(n)$ with $k=n-1$
(or $n-3,\ n-5,\ldots$). Thus \prettyref{thm:Rev-exch-n} implies
that $(X_{1},X_{2},\ldots)$ is SIAMX{*} and SIAMN{*}. If in addition
both \eqref{suffcondstrictineq1ell} and \eqref{suffcondstrictineq2ell}
hold, then by \prettyref{pro:Rev-exch-n}(ii), $(X_{1},X_{2},\ldots)$
is SSIAMX{*} and SSIAMN{*}. (These results can again be verified directly,
as in \prettyref{exa:Sym-iid-n}.)

In fact, the same conclusions holds if \prettyref{eq:Alternate-series}
is weakened to the condition \begin{equation}
X_{i}\eqd\epsilon_{i}X_{1},\ i=2,3,\ldots,\end{equation}
 where \begin{equation*}
\epsilon_{2}=-1,\ \epsilon_{i}=\pm1,\;\mbox{for}\; i\ge3.\label{eq:Sign-sequence}\end{equation*}
 Now $(X_{1},...,X_{n})$ is RE$(n)$ with either $k=1$ or $k=2$.
\hfill{}$\square$
\end{example}
We now present an example where it seems difficult to circumvent \prettyref{thm:Rev-exch-n}.
Such examples arise when $X_{1},X_{2},\dots$ are not independent.
(Also see Examples \ref{exa:Draw-without-replacement-ncontd} and
\ref{exa:intraclass}.)
\begin{example}
\label{Gaussiansequence} Consider a Gaussian sequence $(X_{1},X_{2},\ldots)$
with $\mathrm{E}(X_{i})=\mu_{i}$, $\mathrm{Var}(X_{i})=\sigma^{2}$,
and $\mathrm{Corr}(X_{i},X_{j})=\rho_{i,j}$. Then $(X_{1},...,X_{n})$
satisfies RE$(n)$ if and only if for some $1\le k(n)\le n-1$, \begin{align}
\mu_{n}= & -\mu_{k(n)},\label{eq:REn-mu}\\
\rho_{n,j}= & -\rho_{k(n),j},\ \forall\, j<n,\ j\ne k(n).\label{eq:REn-rho}\end{align}
 If these conditions hold for every $n=2,3,\ldots$, then \prettyref{thm:Rev-exch-n}
implies that the sequence is SIAMX{*} 
and, by \prettyref{pro:Rev-exch-n}(ii), is SSIAMX{*} if the Gaussian
sequence is nonsingular.

Since $k(n)<n$, the functional iterates $k^{(q)}(n)$ strictly decrease
with $q$. Let $q_{n}$ be the smallest $q$ such that $k^{(q)}(n)=1$;
note that $q_{2}=1$. Thus, if \prettyref{eq:REn-mu} holds for all
$n=2,3,\ldots$ then $\mu_{n}=(-1)^{q_{n}}\mu_{1}$, so the sequence
of means $\boldsymbol{\mu}_{\infty}:=(\mu_{1},\mu_{2},\dots)$ takes
the form \begin{equation*}
\boldsymbol{\mu}_{\infty}=\mu\cdot\big(1,-1,(-1)^{q_{3}},(-1)^{q_{4}},\dots\big)\end{equation*}
 for some scalar $\mu$. \smallskip{}

If \prettyref{eq:REn-rho} holds for all $n=2,3,\ldots$, the structure
of the correlation matrix $\mathbf{R}_{\infty}:=(\rho_{i,j}\mid1\le i,j<\infty)$
is more complicated to describe. We present two special cases:
\begin{caseenv}
\item $k(n)=1$ for each $k\ge2$: Here each $q_{n}=1$ so \begin{equation*}
\boldsymbol{\mu}_{\infty}=\mu\cdot(1,-1,-1,-1,\ldots)\end{equation*}
 and $\mathbf{R}_{\infty}$ has the form \begin{equation*}
\mathbf{R}_{\infty}=\left(\begin{array}{ccccc}
1 & \rho_{1} & \rho_{2} & \rho_{3} & \cdots\\
\rho_{1} & 1 & -\rho_{1} & -\rho_{1} & \cdots\\
\rho_{2} & -\rho_{1} & 1 & -\rho_{2} & \cdots\\
\rho_{3} & -\rho_{1} & -\rho_{2} & 1 & \cdots\\
\vdots & \vdots & \vdots & \vdots & \ddots\end{array}\right).\end{equation*}

\item $k(n)=n-1$ for each $k\ge2$: Here $q_{n}=n-1$ so \begin{equation*}
\boldsymbol{\mu}_{\infty}=\mu\cdot(1,-1,1,-1,1,\ldots)\end{equation*}
 and $\mathbf{R}_{\infty}$ has the form \begin{equation*}
\mathbf{R}_{\infty}=\left(\begin{array}{cccccc}
1 & \rho_{1} & -\rho_{1} & \rho_{1} & -\rho_{1} & \cdots\\
\rho_{1} & 1 & \rho_{2} & -\rho_{2} & \rho_{2} & \cdots\\
-\rho_{1} & \rho_{2} & 1 & \rho_{3} & -\rho_{3} & \cdots\\
\rho_{1} & -\rho_{2} & \rho_{3} & 1 & \rho_{4} & \cdots\\
-\rho_{1} & \rho_{2} & -\rho_{3} & \rho_{4} & 1 & \cdots\\
\vdots & \vdots & \vdots & \vdots & \vdots & \ddots\end{array}\right).\end{equation*}

\end{caseenv}
Thus $(X_{1},X_{2},\ldots)$ is SIAMX{*} and SIAMN{*}, and is SSIAMX{*}
and SSIAMN{*} if the Gaussian sequence is nonsingular. 
\hfill{}$\square$
\end{example}
Lastly, we have the following result for an exchangeable random vector:
\begin{cor}
\label{cor:Exchangeable}If $(X_{1},\ldots,X_{n})$ is exchangeable,
then $(-X_{1},X_{2},\ldots,X_{n})$ is SIAMX{*} and SIAMN{*}. \end{cor}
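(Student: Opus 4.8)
The plan is to reduce Corollary~\ref{cor:Exchangeable} to the already-established Theorem~\ref{thm:Rev-exch-n} by exhibiting, for each $l = 2, \dots, n$, a witness index $k(l) < l$ making $(-X_1, X_2, \dots, X_l)$ satisfy RE$(l)$. The key observation is that exchangeability of $(X_1, \dots, X_n)$ is a very strong symmetry, so after the single sign-flip on the first coordinate we should still be able to locate a pair whose reverse-exchange leaves the joint law invariant.

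First I would set $Y_1 := -X_1$ and $Y_i := X_i$ for $i \ge 2$, and aim to show that $(Y_1, \dots, Y_l)$ is RE$(l)$ for each $l \ge 2$, since that is exactly the hypothesis of Theorem~\ref{thm:Rev-exch-n}. For the defining pair I would try $(k, l) = (1, 2)$, i.e.\ check whether reverse-exchanging the first two coordinates preserves the law: by Definition~\ref{Def3} this asks whether
\begin{equation*}
(Y_1, Y_2, Y_3, \dots, Y_l) \eqd (-Y_2, -Y_1, Y_3, \dots, Y_l).
\end{equation*}
Unwinding the substitution, the left side is $(-X_1, X_2, X_3, \dots, X_l)$ and the right side is $(-X_2, X_1, X_3, \dots, X_l)$. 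So the required identity becomes
\begin{equation*}
(-X_1, X_2, X_3, \dots, X_l) \eqd (X_1, -X_2, X_3, \dots, X_l),
\end{equation*}
and I would verify this directly from exchangeability: swapping coordinates $1$ and $2$ of $(X_1, \dots, X_l)$ is a permutation, hence leaves the law unchanged, and this swap carries $(-X_1, X_2, X_3, \dots)$ to $(X_2, -X_1, X_3, \dots)$. Thus the claim reduces to whether $(-X_1, X_2, \dots) \eqd (X_1, -X_2, \dots)$, which is precisely the image of the transposition $(1\,2)$ applied to the target, so the two sides agree. I would write out this chain of equalities in distribution carefully to confirm $(Y_1, \dots, Y_l)$ is indeed RE$(1,2)$, hence RE$(l)$, for every $l \ge 2$.

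Once RE$(l)$ holds for all $l = 2, \dots, n$, Theorem~\ref{thm:Rev-exch-n} applies to $(Y_1, \dots, Y_n) = (-X_1, X_2, \dots, X_n)$ and immediately delivers SIAMX* and SIAMN*, which is the desired conclusion. The bulk of the work is therefore just the verification in the previous paragraph.

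\textbf{The main obstacle} I anticipate is getting the direction of the reverse-exchange bookkeeping exactly right: the RE$(k,l)$ operation replaces $(X_k, X_l)$ by $(-X_l, -X_k)$, which combines a transposition \emph{and} a double sign-flip, whereas exchangeability alone only gives the transposition. The delicate point is confirming that the extra sign-flips introduced by RE$(1,2)$ cancel correctly against the sign-flip built into $Y_1 = -X_1$, so that what remains is a pure permutation to which exchangeability applies. I would double-check this by tracking each coordinate's sign through both the substitution $Y_i \mapsto \pm X_i$ and the RE operation, rather than trusting the symmetry informally.
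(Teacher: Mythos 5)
Your reduction to Theorem~\ref{thm:Rev-exch-n} is the right strategy (it is the paper's own), and the distributional identity you target is in fact true: $(-X_1,X_2,\dots,X_l)$ is RE$(1,2)$ for every $l$ (though your verification, which permutes the \emph{entries} of the transformed vector, should instead apply the fixed map $(x_1,\dots,x_l)\mapsto(-x_2,x_1,x_3,\dots,x_l)$ to both sides of $(X_1,X_2,\dots)\eqd(X_2,X_1,\dots)$). The genuine gap is the step ``RE$(1,2)$, hence RE$(l)$.'' By Definition~\ref{Def3}, RE$(l)$ means RE$(k,l)$ for some $k<l$: the reverse-exchanged pair must contain the \emph{last} index $l$. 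For $l\ge3$, RE$(1,2)$ is not of this form, so the hypothesis of Theorem~\ref{thm:Rev-exch-n} is never verified. This requirement is not cosmetic: SIAMX* at stage $l$ demands
\begin{equation*}
|\max(-X_1,X_2,\dots,X_{l-1})|\stoless|\max(-X_1,X_2,\dots,X_l)|,
\end{equation*}
i.e.\ the comparison obtained by dropping coordinate $l$, whereas Proposition~\ref{pro:Rev-exch-n} applied with the pair $(1,2)$ only permits dropping coordinate $1$ or coordinate $2$ (the two exchanged indices). Remark~\ref{counterexample2} shows that which coordinate is dropped genuinely matters, so with your witness pair the needed inequality never appears for $l\ge3$.

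The fix is one line and is exactly the paper's proof: take the pair $(1,l)$ instead of $(1,2)$. Exchangeability gives $(X_1,X_2,\dots,X_{l-1},X_l)\eqd(X_l,X_2,\dots,X_{l-1},X_1)$; applying the fixed map that negates the first coordinate yields
\begin{equation*}
(-X_1,X_2,\dots,X_{l-1},X_l)\eqd(-X_l,X_2,\dots,X_{l-1},X_1).
\end{equation*}
Writing $Y_1=-X_1$ and $Y_i=X_i$ for $i\ge2$, the right-hand side equals $(-Y_l,Y_2,\dots,Y_{l-1},-Y_1)$, since $-Y_l=-X_l$ and $-Y_1=X_1$; thus $(Y_1,\dots,Y_l)$ is RE$(1,l)$, hence RE$(l)$, for every $l=2,\dots,n$, and Theorem~\ref{thm:Rev-exch-n} delivers SIAMX* and SIAMN*. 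Note that the sign-cancellation obstacle you anticipated dissolves here as well: the negation built into $Y_1$ exactly absorbs the two sign flips of the reverse exchange, leaving only the transposition of indices $1$ and $l$, which is precisely what exchangeability provides.
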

\begin{proof}
Exchangeability implies that $(-X_{1},\ldots,X_{l})$ is RE$(1,l)$
for $l=2,\dots,n$, so this result follows from \prettyref{thm:Rev-exch-n}. \end{proof}
\begin{example}
\label{Example25} Suppose that $X=(X_{1},...,X_{n})$ represent $n$
random draws (without replacement) from a finite set of real numbers.
Since \linebreak $(X_{1},\ldots,X_{n})$ is exchangeable, it follows from \prettyref{cor:Exchangeable}
that \linebreak $(-X_{1},X_{2},\ldots,X_{n})$ is SIAMX{*} and SIAMN{*}. 
\hfill{}$\square$
\end{example}

\section{Reverse Sub(Super)exchangeability}
\begin{example}
\label{exa:Draw-without-replacement-n}(\prettyref{exa:Draw-without-replacement-2}
extended). Suppose that $X_{1},\dots,X_{n}$ represent $n$ random
draws without replacement from the finite symmetric set $A\subset\mathbb{R}$,
where $n\le|A|$. In \prettyref{exa:ERE-not-ESCI} it was shown that
$(X_{1},X_{2})$ is RE $\equiv$ RE$(2)$. However, $(X_{1},\dots,X_{l})$
is not RE$(l)$ for $3\le l\le n\wedge(|A|-1)$: for example, if $l=3$
and $a,b\in A$, $a,b>0$, $a\ne b$, then $(X_{1},X_{2},X_{3})$
is not RE$(1,3)$: \begin{align*}
0&= \prob[X_{1}=-a,\,X_{2}=-a,\,X_{3}=b]\\
&< \prob[X_{1}=-b,\,X_{2}=-a,\,X_{3}=a]\\
&= \prob[-X_{3}=-a,\,X_{2}=-a,\,-X_{1}=b],\end{align*}
 where the strict inequality holds since $-b,-a,a$ are distinct.
Similarly \linebreak $(X_{1},X_{2},X_{3})$ is not RE$(2,3)$, hence $(X_{1},X_{2},X_{3})$
is not RE$(3)$. Thus the condition of \prettyref{thm:Rev-exch-n}
is not satisfied. Nonetheless, $(X_{1},\dots,X_{n})$ is SSIAMX and
SSIAMN in this example. 
\hfill{}$\square$
\end{example}
To establish this fact we introduce the notions of \emph{ reverse
subexchangeability} and \emph{ reverse superexchangeability}, weaker
conditions than reverse exchangeability. For simplicity we shall restrict
attention to random vectors $(X_{1},\dots,X_{n})$ whose distributions
are determined by $f(x_{1},\dots,x_{n})$, which is either a discrete
probability mass function (pmf) or a probability density function
(pdf) w.r.to Lebesgue measure. We begin with the bivariate case.

\begin{defn}
\label{Def7} The bivariate random vector $(X,Y)$ is called \emph{upper
(lower) reverse subexchangeable}, denoted by \emph{UR$_{\mathrm{E}}$ (LR$_{\mathrm{E}}$)},
if \begin{equation}
f(x,y)\ge f(-y,-x),\ \ \mbox{for}\ |x|<y\ \ (|y|<x).\label{RSEbivariate3}
\end{equation}
The rv $(X,Y)$ is called \emph{upper (lower) reverse superexchangeable},
denoted by \emph{UR$^{\mathrm{E}}$ (LR$^{\mathrm{E}}$)}, if
\begin{equation}
f(x,y)\le f(-y,-x),\ \ \mbox{for}\ |x|<y\ \ (|y|<x).\label{RSEbivariate4}
\end{equation}
\end{defn}

\begin{prop}
\label{pro:Rev-exch-5}
\begin{align*}
 & (i) &  &(X,Y)\ \mathrm{UR}_{\mathrm{E}}\   & \implies &|X|\stoless|\max(X,Y)|\ \mathrm{and}\ |\min(X,Y)|\stoless|Y|;\\
 & (ii) &  & (X,Y)\ \mathrm{LR}_{\mathrm{E}}\   & \implies & |Y|\stoless|\max(X,Y)|\ \mathrm{and}\ |\min(X,Y)|\stoless|X|;\\
 & (iii) &  & (X,Y)\ \mathrm{UR}^{\mathrm{E}}\  & \implies & |X|\stogr|\max(X,Y)|\ \mathrm{and}\ |\min(X,Y)|\stogr|Y|;\\
  & (iv) &  &(X,Y)\ \mathrm{LR}^{\mathrm{E}}\   & \implies & |Y|\stogr|\max(X,Y)|\ \mathrm{and}\ |\min(X,Y)|\stogr|X|.
 \end{align*}
 The stochastic inequalities in (i) (resp., (iii)) are strict if and only if
 \begin{equation}
\prob[\,|X|<Y\,]\ >\ (<)\ \prob[\,X<-|Y|\,].\label{eq:YXneXY}
\end{equation}
 Likewise, the stochastic inequalities in (ii) (resp., (iv))
are strict if and only if
\begin{equation}
\prob[\,|Y|<X\,]\ >\ (<)\ \prob[\,Y<-|X|\,].
\end{equation}
 \end{prop}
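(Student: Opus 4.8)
The plan is to reduce every assertion to the sign of one of the two wedge-probability differences already isolated in \eqref{NminusW} and \eqref{SminusE}. By \eqref{NminusW}, $F_{|X|}(x)-F_{|\max(X,Y)|}(x)=\prob[N_x]-\prob[W_x]=F_{|\min(X,Y)|}(x)-F_{|Y|}(x)$, so the two orderings in (i) hold together exactly when $\prob[N_x]\ge\prob[W_x]$ for all $x\ge0$, and the two orderings in (iii) hold together exactly when the reverse inequality holds for all $x$. Similarly, by \eqref{SminusE} the orderings in (ii) and (iv) are governed entirely by the sign of $\prob[S_x]-\prob[E_x]$.

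The engine of the proof is the reflection $T\colon(a,b)\mapsto(-b,-a)$ about the line $y=-x$. A direct check shows $T$ carries $N_x$ bijectively onto $W_x$ and $E_x$ bijectively onto $S_x$. Since $T$ is measure preserving (a linear involution with $|\det|=1$ in the pdf case, and a bijection of the countable support in the pmf case), the substitution $(u,v)=T(a,b)$ gives $\prob[W_x]=\int_{N_x}f(-b,-a)$, hence $\prob[N_x]-\prob[W_x]=\int_{N_x}\bigl[f(a,b)-f(-b,-a)\bigr]$. On $N_x$ one has $|a|\le x<b$, so $|a|<b$, which is precisely the region $|x|<y$ appearing in Definition~\ref{Def7}; thus the UR$_{\mathrm{E}}$ hypothesis \eqref{RSEbivariate3} makes the integrand nonnegative and yields $\prob[N_x]\ge\prob[W_x]$, proving (i), while UR$^{\mathrm{E}}$ in \eqref{RSEbivariate4} flips the sign and proves (iii). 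The identical computation over $E_x$, where $|b|\le x<a$ matches the region $|y|<x$, handles (ii) and (iv) from LR$_{\mathrm{E}}$ and LR$^{\mathrm{E}}$.

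For the strictness characterizations I apply $T$ to the full wedges rather than to the truncated sets $N_x,E_x$. Put $g=f-f\circ T$, which is sign definite on the north wedge $\{|X|<Y\}$ (nonnegative under UR$_{\mathrm{E}}$, nonpositive under UR$^{\mathrm{E}}$). Because $T$ maps this wedge bijectively onto the west wedge $\{X<-|Y|\}$, the same change of variables gives $\prob[\,|X|<Y\,]-\prob[\,X<-|Y|\,]=\int_{\{|X|<Y\}}g$. It remains to show that $\int_{N_x}g\ne0$ for some $x$ if and only if $\int_{\{|X|<Y\}}g\ne0$. One direction is trivial since $N_x\subseteq\{|X|<Y\}$. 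For the other, note $\{|X|<Y\}=\bigcup_{x\ge0}N_x$, and since for each point of the wedge the nonempty interval $[|a|,b)$ contains a rational, the wedge is already the countable union $\bigcup_{x\in\mathbb{Q},\,x\ge0}N_x$; sign definiteness of $g$ then forces $\int_{N_x}g\ne0$ for at least one such $x$ whenever the wedge integral is nonzero. This delivers the equivalence with \eqref{eq:YXneXY} for (i) and (iii), and the parallel argument over the east and south wedges gives the condition for (ii) and (iv).

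The routine parts---verifying that $T$ is measure preserving and that it permutes the four regions as claimed---are immediate. The main obstacle is the strictness step: converting the pointwise-in-$x$ statement ``some cdf gap is strict'' into the single global wedge inequality \eqref{eq:YXneXY}. This hinges on the sign definiteness of $g$ together with the countable covering of the wedge by the sets $N_x$, and one must check that the argument runs uniformly in the pdf and pmf cases.
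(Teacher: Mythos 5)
Your proposal is correct and takes essentially the same approach as the paper: the four orderings are read off from \eqref{NminusW} and \eqref{SminusE} after the reflection $(x,y)\mapsto(-y,-x)$ turns the UR$_{\mathrm{E}}$/LR$_{\mathrm{E}}$ hypotheses into $\prob[N_x]\ge\prob[W_x]$ and $\prob[S_x]\le\prob[E_x]$ (reversed in the superexchangeable cases), and the strictness criterion is obtained exactly as in the paper by observing that $A\mapsto\prob[A]-\prob[\tilde{A}]$ (your $\int_A g$) is a nonnegative countably additive measure on the wedge $N=\{|X|<Y\}$, which is the countable union of the $N_x$ over rational $x\ge0$. The only difference is cosmetic: the paper manipulates this set function directly, whereas you express it as the integral of the density difference $f-f\circ T$.
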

\begin{proof}
Because $(X,Y)$ UR$_{\mathrm{E}}$ $\Rightarrow$ $\prob[N_x]\ge \prob[W_x]$ and $(X,Y)$ LR$_{\mathrm{E}}$ $\Rightarrow$ $\prob[S_x]\le \prob[E_x]$, (i) and (ii) follow from \eqref{NminusW} and \eqref{SminusE} respectively. Parts (iii) and (iv) follow similarly with the inequalities reversed.

To establish strict stochastic inequality in (i), define $N:\{|X|<Y\}$ and, for any measurable $A\subseteq N$, define
\begin{equation*}
\tilde A:=\{(-y,-x)\mid (x,y)\in A\},
\end{equation*}
the reflection of $A$ across the line $y=-x$. Note that $\tilde N=\{X<-|Y|\}=:W$ and $\tilde N_x=W_x$ for $x\ge0$ (recall \eqref{Nsubx} and \eqref{Wsubx}).

For any measurable subset $A\subseteq N$, define
\begin{equation*}
\sigma(A):=\prob[A]-\prob[\tilde A],
\end{equation*}
so that (recall \eqref{Nsubx} and \eqref{Wsubx},
\begin{align}
\sigma(N)&=\prob[N]-\prob[W]=\prob[\,|X|<Y\,]-\prob[\,X<-|Y|\,],\label{4.12}\\
\sigma(N_x)&=\prob[N_x]-\prob[W_x].\label{4.13}
\end{align}
Clearly $\sigma$ is a countably additive set function. Since
 $(X,Y)$ is $\mathrm{UR}_{\mathrm{E}}$, we have that
 \begin{equation*}
\sigma(A)\ge0,\ \ \forall\ \mathrm{measurable}\ A\subseteq\Omega,
\end{equation*}
so $\sigma$ is a nonnegative measure. Thus, because $N$ is the countable union
\begin{equation*}
N=\bigcup(N_x\mid x\ge0,\ x\ \mathrm{rational}),
\end{equation*}
it follows that
\begin{equation*}
 \sigma(N)>0\iff \sigma(N_x)>0,\ \mathrm{for\ at\ least\ one\ rational}\ x\ge0.
\end{equation*}
The result now follows from \eqref{4.12}, \eqref{4.13}, and \eqref{NminusW}.
\smallskip

Cases (ii), (iii), and (iv) are treated similarly.
 \end{proof}

\begin{example}
\label{exa:Normal-two-means} (\prettyref{exa:Bivariate-Elliptical} extended).
Suppose that $(X,Y)$ has a bivariate elliptical pdf on
$\mathbb{R}^{2}$ given by
\begin{equation*}
f(x,y)=|\Sigma|^{-1/2}g\left[(x-\mu,\, y-\nu)\,\Sigma^{-1}(x-\mu,\, y-\nu)'\right],
\end{equation*}
 where $\Sigma=\sigma^{2}\begin{pmatrix}1 & \rho\\\rho & 1\end{pmatrix}$, $-1<\rho<1$. Assume that
$g$ is \emph{nonincreasing and strictly positive} on $[0,\infty)$. (This includes the case where $(X,Y)\sim N_2((\mu,\nu),\,\Sigma)$). After some algebra we find that
\begin{equation*}
(x+y)(\mu+\nu)\ge0\implies f(x,y)\ge f(-y,-x),
\end{equation*}
regardless of the value of $\rho$, so  
 \begin{equation}
\mu+\nu>0\implies (X,Y)\ \mathrm{is\ UR}_{\mathrm{E}}\ \mathrm{and\ LR}_{\mathrm{E}}.\label{101}
\end{equation}
Furthermore  $(X-\mu,Y-\nu)$ is RE, so
\begin{align*}
\prob[\,X<-|Y|\,]&= \prob[\,(X-\mu)+\mu<-|(Y-\nu)+\nu|\,]\\
&= \prob[\,-(Y-\nu)+\mu<-|-(X-\mu)+\nu|\,]\\
&= \prob[\,Y-(\mu+v)>|X-(\mu+\nu)|\,]\\
&=\prob[\,Y>X,\ X+Y>2(\mu+\nu)\,].
 \end{align*}
Thus, if $\mu+\nu>0$ then
\begin{align*}
 & \prob[\,|X|<Y\,]-\prob[\, X<-|Y|\,]\\
= & \prob[\, Y>X,\ Y>-X\,]-\prob[\, Y>X,\ X+Y>2(\mu+\nu)\,]\\
= & \prob[\, Y>X,\ X+Y>0\,]-\prob[\, Y>X,\ X+Y>2(\mu+\nu)\,]\\
= & \prob[\, Y>X,\ 0<X+Y\le2(\mu+\nu)\,],
\end{align*}
which is strictly positive since $g$ is strictly positive on $[0,\infty)$.
It follows from \prettyref{pro:Rev-exch-5}  that
\begin{equation}
\mu+\nu>0\implies |\min(X,Y)|\stolesstr\begin{matrix}|X|\\|Y|\end{matrix}\stolesstr|\max(X,Y)|. \label{102}
\end{equation}

Similarly,
\begin{align}
\mu+\nu<0\implies & (X,Y)\ \mathrm{is\ UR}^{\mathrm{E}}\ \mathrm{and\ LR}^{\mathrm{E}}\label{103}\\
\implies & |\max(X,Y)|\stolesstr\begin{matrix}|X|\\|Y|\end{matrix}\stolesstr|\min(X,Y)|. \label{104}
\end{align}
(Note that if $\mu+\nu=0$ then $(X,Y)$ is RE so \prettyref{exa:Bivariate-Elliptical}
applies, hence these stochastic inequalities become stochastic equalities.)
\hfill{}$\square$
\end{example}

\begin{example}
\label{exa:Sym-iid-21}Suppose that $(X,Y)$ has joint pmf or pdf
given by \begin{equation*}
f(x,y)=g(x)\, h(y),
\end{equation*}
 where in addition, $g$ and $h$ are symmetric about 0, i.e., $g(x)=g(-x)$
and $h(y)=h(-y)$. Thus $X$ and $Y$ are independent and SCI, but
neither E nor RE if $g\ne h$. Here
\begin{align}
f(x,y)\ge f(-y,-x)&\iff g(|x|) h(|y|)\ge g(|y|)h(|x|),\label{ghsymmetric}\\
 &\iff f_{|X|}(|x|)f_{|Y|}(|y|)\ge  f_{|X|}(|y|)f_{|Y|}(|x|),\label{ghsymmetric2}
\end{align}
where $f_{|X|}$ and $f_{|Y|}$ denote the pmfs or pdfs of $|X|$ and $|Y|$, respectively.

%
 %

 Now specialize to the case where $f_{|X|}=f_{\theta'}$
and $f_{|Y|}=f_{\theta''}$ are pmfs or pdfs in a one-parameter family
$\{f_{\theta}\}$ of pmfs or pdfs on $[0,\infty)$ with
\emph{strictly monotone-increasing likelihood ratio}.
Then for $\theta'<\theta''$,
the inequalities \eqref{ghsymmetric}-\eqref{ghsymmetric2} hold whenever $0<x<y$
while the opposite inequalities hold whenever $0<y<x$.
Thus $(X,Y)$ is UR$_{\mathrm{E}}$ and LR$^{\mathrm{E}}$ if $\theta'<\theta''$.
 Furthermore by symmetry
 \begin{align*}
 \prob[\,|X|<Y]&=\prob[Y<-|X|\,]=\frac{1}{2}\prob[\,|X|<|Y|\,],
 \\
 \prob[\,X<-|Y|\,]&=\prob[\,|Y|<X]=\frac{1}{2}\prob[\,|Y|<|X|\,],
 \end{align*}
 and
 \begin{equation*}
 \prob[\,|X|<|Y|\,]>\prob[\,|Y|<|X|\,]
 \end{equation*}
 by the strict monotone likelihood ratio assumption for $|X|$ and $|Y|$. Thus by \prettyref{pro:Rev-exch-5} 
(also note $(X,Y)\eqd (-X,-Y)$),
\begin{equation}
|X|\stolesstr|\min(X,Y)|\eqd |\min(-X,-Y)| \eqd |\max(X,Y)|\stolesstr|Y|.\label{strictMLR}
\end{equation}

 The scale-parameter families $\{N(0,\theta)\mid \theta>0\}$ and $\{C(0,\theta)\mid \theta>0\}$
of centered normal and Cauchy pdfs satisfy the assumptions of this
example, hence satisfy \eqref{strictMLR} when $0<\theta'<\theta''$. \hfill{}$\square$
\end{example}

\begin{example}
\label{exa:Bivariate-Normal21} (\prettyref{exa:Bivariate-Elliptical}
extended). Suppose that $(X,Y)$ has a centered bivariate elliptical
pdf on $\mathbb{R}^{2}$ given by \begin{align*}
f(x,y)&= |\Sigma|^{-1/2}g\left[(x,\, y)\,\Sigma^{-1}(x,\, y)'\right]
\\
&= |\Sigma|^{-1/2}g\left[d(\sigma,\tau,\rho)(x^{2}\tau^{2}-\rho\sigma\tau xy+y^{2}\sigma^{2})\right],
\end{align*}
 where $\Sigma=\begin{pmatrix}\sigma^{2} & \rho\sigma\tau\\
\rho\sigma\tau & \tau^{2}\end{pmatrix}$ is positive definite and $d(\cdot,\cdot,\cdot)>0$. Assume that
$g$ is \emph{nonincreasing} on $[0,\infty)$. (This includes the case $(X,Y)\sim N_2((0,0),\,\Sigma)$).  Then
\begin{equation*}
(y^{2}-x^{2})(\tau^{2}-\sigma^{2})\ge 0\implies f(x,y)\ge f(-y,-x),
\end{equation*}
regardless of the value of $\rho$, so
 \begin{equation}
 \tau^{2}>\sigma^{2}\implies (X,Y)\ \mathrm{is\ UR}_{\mathrm{E}}\ \mathrm{and\ LR}^{\mathrm{E}}.\label{tausigineq1}
\end{equation}
Furthermore, $(\tau X,\sigma Y)$ is RE, so
\begin{align*}
\prob[\,X<-|Y|\,]
&=  \prob[\,-\sigma Y/\tau<-|\tau X/\sigma|\,]\\
&=  \prob[\,Y>(\tau/\sigma)^2 |X|\,].
 \end{align*}
Thus if $\tau^2>\sigma^2$ then
\begin{equation*}
 \prob[\,|X|<Y\,]-\prob[\,X<-|Y|\,]=\prob[\,(\tau/\sigma)^2 |X|\ge Y>|X|\,],
 \end{equation*}
which is strictly positive for any bivariate elliptical distribution.
It follows from \prettyref{pro:Rev-exch-5}  that (also note $(X,Y)\eqd (-X,-Y)$)
\begin{equation}
\tau^2>\sigma^2\implies |X|\stolesstr|\min(X,Y)|
\eqd |\min(-X,-Y)| \eqd|\max(X,Y)|\stolesstr|Y|,\label{tausigineq2}
\end{equation}
regardless of the value of $\rho$. Similarly,
\begin{equation}
\tau^2<\sigma^2\implies |Y|\stolesstr|\min(X,Y)|\eqd|\max(X,Y)|\stolesstr|X|.\label{tausigineq3}
\end{equation}
\hfill{}$\square$
\end{example}

We now turn to the multivariate case; take $n\ge3$ for the remainder
of this section.
\begin{defn}
\label{Def8} The random vector $(X_{1},\dots,X_{n})$ is said to
be \emph{UR$_{\mathrm{E}}(k,l)$} for indices $1\le k<l\le n$ if 
$f(x_1,\dots,x_n)$ decreases when $(x_k, x_l)$ is replaced by $(-x_l, -x_k)$, i.e.,
\begin{equation}
f(x_{1},\dots,x_{k},\dots,x_{l},\dots,x_{n})\ge f(x_{1},\dots,-x_{l},\dots,-x_{k},\dots,x_{n}),\label{defhigherULRE}
\end{equation}
 whenever $|x_{k}|<x_{l}$ and $x_{i}<-|x_{k}|$ for all $i\ne k,l$.
It is \emph{LR$_{\mathrm{E}}(k,l)$} if \eqref{defhigherULRE} holds
whenever $|x_{l}|<x_{k}$ and $x_{i}<-|x_{l}|$ for all $i\ne k,l$.
The rv $(X_{1},\dots,X_{n})$ is called \emph{UR$_{\mathrm{E}}(l)$
(LR$_{\mathrm{E}}(l)$)} if it is UR$_{\mathrm{E}}(k,l)$ (LR$_{\mathrm{E}}(k,l)$)
for some $k\ne l$. Then \emph{UR$^{\mathrm{E}}(k,l)$, LR$^{\mathrm{E}}(k,l)$,
UR$^{\mathrm{E}}(n)$, and LR$^{\mathrm{E}}(n)$)} are defined like
their counterparts but with the inequality reversed in \eqref{defhigherULRE}.
\smallskip{}
\end{defn}

\begin{prop}
\label{pro:Rev-subsupexch-n}For $1\le k<l\le n$,
\begin{align*}
& (i) & &(X_{1},\dots,X_{n})\ \mathrm{UR}_{\mathrm{E}}(k,l)\  \implies|\max(X_{i}\mid i\ne l)|\stoless|\max(X_{1},\dots,X_{n})|;\\
& (ii) & & (X_{1},\dots,X_{n})\ \mathrm{LR}_{\mathrm{E}}(k,l)\  \implies|\max(X_{i}\mid i\ne k)|\stoless|\max(X_{1},\dots,X_{n})|;\\
& (iii) & & (X_{1},\dots,X_{n})\ \mathrm{UR}^{\mathrm{E}}(k,l)\  \implies|\min(X_{i}\mid i\ne k)|\stoless|\min(X_{1},\dots,X_{n})|;\\
& (iv) & & (X_{1},\dots,X_{n})\ \mathrm{LR}^{\mathrm{E}}(k,l)\  \implies|\min(X_{i}\mid i\ne l)|\stoless|\min(X_{1},\dots,X_{n})|.
\end{align*}
The stochastic inequalities in (i) -- (iv) become strict under the four conditions \begin{align}
\prob[X_{l}>\max(|X_{i}|\mid1\le i\le n,\; i\ne l)]> & \ 0,\label{strictineqmaxl}\\
\prob[X_{k}>\max(|X_{i}|\mid1\le i\le n,\; i\ne k)]> & \ 0,\label{strictineqmaxk}\\
\prob[X_{k}<-\max(|X_{i}|\mid1\le i\le n,\; i\ne k)]> & \ 0,\label{eq:strictineqminm}\\
\prob[X_{l}<-\max(|X_{i}|\mid1\le i\le n,\; i\ne l)]> & \ 0,\label{eq:strictineqminm-1}\end{align}
 respectively.
 \end{prop}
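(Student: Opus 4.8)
The plan is to prove Proposition~\ref{pro:Rev-subsupexch-n} by reducing the $n$-dimensional subexchangeability statement to the machinery already developed for the exact RE$(k,l)$ case in \prettyref{pro:Rev-exch-n}, using the same disjoint-event decomposition of $\Omega_n(x) = \{|\max(X_1,\dots,X_n)|\le x\}$. Concretely, I would establish (i) and then obtain (ii)--(iv) by the familiar symmetries: replacing $(X_1,\dots,X_n)$ by $(-X_1,\dots,-X_n)$ interchanges the roles of max and min (as in \eqref{absmaxabsmin}), and swapping the definition of UR$_{\mathrm E}$ versus LR$_{\mathrm E}$ reverses the inequality direction and the roles of $k$ and $l$.

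\textbf{The core argument for (i).}
Without loss of generality take $(k,l)=(1,n)$, so the hypothesis UR$_{\mathrm E}(1,n)$ reads $f(x_1,\dots,x_n)\ge f(-x_n,x_2,\dots,x_{n-1},-x_1)$ whenever $|x_1|<x_n$ and $x_i<-|x_1|$ for all $i\ne 1,n$. I would reuse verbatim the set-up from the proof of \prettyref{pro:Rev-exch-n}: for $D\subseteq N=\{1,\dots,n\}$ define the disjoint events $\mathcal T_n(D)=\{X_i<-x\ \forall i\in D\}\cap\{|X_i|\le x\ \forall i\notin D\}$, and for $D\subseteq N\setminus\{n\}$ define $\widetilde{\mathcal T}_n(D)=\{X_i<-x\ \forall i\in D\}\cap\{|X_i|\le x\ \forall i\notin D,\,i\ne n\}\cap\{X_n>x\}$. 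Exactly as in \eqref{eq:Qn} and \eqref{eq:Qn-1}, the comparison $\prob[\Omega_n(x)]\le\prob[\Omega_{n-1}(x)]$ reduces to showing
\begin{equation*}
\prob\big[\mathcal T_n(N\setminus\{n\})\big]\ \le\ \prob\!\left[\bigcup_{D\subset N\setminus\{n\}}\widetilde{\mathcal T}_n(D)\right].
\end{equation*}
In the exact-RE proof this followed from the measure-preserving identity $\prob[\mathcal T_n(N\setminus\{n\})]=\prob[\widetilde{\mathcal T}_n(N\setminus\{1,n\})]$. Here, instead, I would integrate the defining inequality \eqref{defhigherULRE} over the region defining $\mathcal T_n(N\setminus\{n\})$ --- namely $X_1,\dots,X_{n-1}<-x$ and $|X_n|\le x$ --- and check that the reflection $(x_1,x_n)\mapsto(-x_n,-x_1)$ carries this region \emph{into} $\widetilde{\mathcal T}_n(N\setminus\{1,n\})$ while the pointwise hypotheses $|x_1|<x_n$, $x_i<-|x_1|$ $(i\ne1,n)$ are satisfied on it. The inequality $f\ge f\circ(\text{reflection})$ then yields $\prob[\mathcal T_n(N\setminus\{n\})]\le\prob[\widetilde{\mathcal T}_n(N\setminus\{1,n\})]$, which is one term in the union on the right, giving the desired bound and hence \eqref{REnmax} in the $\le$ direction.

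\textbf{Strictness and the remaining cases.}
For the strict inequalities I would argue as in \prettyref{pro:Rev-exch-n}(ii): strictness in $\prob[\Omega_n(x)]\le\prob[\Omega_{n-1}(x)]$ holds as soon as some $\widetilde{\mathcal T}_n(D)$ with $D\ne N\setminus\{1,n\}$ has positive probability for some rational $x\ge0$; taking $D=\emptyset$ gives the event $\{|X_i|\le x\ \forall i\le n-1,\ X_n>x\}$, and a union-over-rationals argument (identical to the footnoted one after \eqref{strictineqemptyset}) converts this into the condition $\prob[X_n>\max(|X_i|\mid i\ne n)]>0$, matching \eqref{strictineqmaxl}. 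Parts (ii)--(iv) then follow by applying (i) to suitably reflected vectors: (iv) comes from (i) via $(X_1,\dots,X_n)\mapsto(-X_1,\dots,-X_n)$ together with \eqref{absmaxabsmin}, and (ii)--(iii) come from interchanging the two distinguished coordinates.

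\textbf{Anticipated obstacle.}
The step I expect to be most delicate is verifying that the reflection $(x_1,x_n)\mapsto(-x_n,-x_1)$ maps the integration region of $\mathcal T_n(N\setminus\{n\})$ precisely into the region where the subexchangeability inequality \eqref{defhigherULRE} is \emph{valid} --- that is, confirming the pointwise constraints $|x_1|<x_n$ and $x_i<-|x_1|$ for $i\ne1,n$ hold throughout, and that the image lands inside $\widetilde{\mathcal T}_n(N\setminus\{1,n\})$ rather than merely overlapping it. Unlike the exact-RE case, where the reflection is a measure-preserving bijection and one has an equality of probabilities, here the inequality only holds on the prescribed cone, so the geometry of these cones must be matched carefully to the event decomposition. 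Once this containment is pinned down, the rest is the bookkeeping already carried out in \prettyref{pro:Rev-exch-n}.
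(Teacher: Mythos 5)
Your part (i), your part (ii), and your strictness analysis are sound and essentially coincide with the paper's own proof, which likewise keeps the decomposition \eqref{eq:Qn}--\eqref{eq:Qn-1} verbatim and merely replaces the equality in \eqref{strictineqTtilde} by the inequality $\prob[\mathcal{T}_{n}(N\backslash\{n\})]\le\prob[\widetilde{\mathcal T}_n(N\backslash\{1,n\})]$, justified by \eqref{defhigherULRE}; your swap-of-coordinates derivation of (ii) from (i) is a correct formalization of the paper's ``similar fashion,'' and the strictness condition \eqref{strictineqmaxk} transfers under it. One direction slip in (i) needs repair: writing $x^{\sigma}$ for the point obtained from $x$ by replacing $(x_{1},x_{n})$ with $(-x_{n},-x_{1})$, the hypotheses of \eqref{defhigherULRE} hold on $\widetilde{\mathcal T}_n(N\backslash\{1,n\})$ (there $|x_{1}|\le x<x_{n}$ and $x_{i}<-x\le-|x_{1}|$), \emph{not} on $\mathcal{T}_{n}(N\backslash\{n\})$; so the inequality valid on the source region is $f(x)\le f(x^{\sigma})$, and integrating ``$f\ge f\circ(\mathrm{reflection})$'' over $\mathcal{T}_{n}(N\backslash\{n\})$, as you wrote, would produce the reverse bound. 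Since you flag exactly this as the delicate point and state the correct conclusion, this is bookkeeping rather than a flaw of approach.

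The genuine gap is in (iii)--(iv). Your reduction ``(iv) $=$ (i) applied to $(-X_{1},\dots,-X_{n})$'' requires that $X$ being $\mathrm{LR}^{\mathrm{E}}(k,l)$ imply that $-X$ is $\mathrm{UR}_{\mathrm{E}}(k,l)$, and for $n\ge3$ this fails under Definition~\ref{Def8} as stated. With $x^{\sigma}$ now denoting $x$ with $(x_{k},x_{l})$ replaced by $(-x_{l},-x_{k})$, a direct computation shows that $-X$ is $\mathrm{UR}_{\mathrm{E}}(k,l)$ if and only if $f(x)\le f(x^{\sigma})$ whenever $x_{k}>|x_{l}|$ and $x_{i}>|x_{l}|$ for all $i\ne k,l$, whereas $\mathrm{LR}^{\mathrm{E}}(k,l)$ asserts this inequality only where $x_{k}>|x_{l}|$ and $x_{i}<-|x_{l}|$ for all $i\ne k,l$: the spectator constraints point in opposite directions, and the two cones are essentially disjoint, so nothing transfers. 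The negation invariance you are implicitly borrowing, \eqref{XiffminusX}, is a feature of the exact condition $\mathrm{RE}(k,l)$ (a distributional identity on all of $\mathbb{R}^{n}$); it does not hold for these cone-restricted density inequalities, and the bivariate case hides the issue because there are no spectator coordinates. Consequently (iii) and (iv) (the latter of which you feed into the former by the swap) must be proved the way the paper does, ``in similar fashion'': rerun the decomposition for the \emph{minimum}, using events $\{X_{i}>x\ \forall i\in D\}\cap\{|X_{i}|\le x\ \forall i\notin D\}$ and their tilde analogues, with the same reflection. Carrying that out shows the pointwise inequality is needed precisely on the cone $\{x_{k}>|x_{l}|,\ x_{i}>|x_{l}|\ \forall i\ne k,l\}$; so your negation-plus-swap scheme would become valid if the spectator inequalities in the superexchangeable half of Definition~\ref{Def8} are read as $x_{i}>|x_{k}|$ (resp.\ $x_{i}>|x_{l}|$), but against the definitions as literally stated your one-line reduction, and hence your proof of (iii)--(iv) and of their strictness conditions \eqref{eq:strictineqminm}--\eqref{eq:strictineqminm-1}, does not go through.
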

\begin{proof}
The proof of (i) is identical to the proof of \eqref{REnmax}
in \prettyref{pro:Rev-exch-n}, except that in \eqref{strictineqTtilde}
the equality (=) is replaced by inequality ($\le$), which is justified
by \eqref{defhigherULRE}. The implications (ii)-(iv)  are established
in similar fashion. An argument similar to that used for \prettyref{pro:Rev-exch-n}(ii) verifies the conditions for strict stochastic inequality.
\end{proof}

%
%

\prettyref{pro:Rev-subsupexch-n} yields the following theorem:
\begin{thm}
\label{thm:subsupRev-exch-n} If $(X_{1},\dots,X_{l})$ is UR$_{\mathrm{E}}(l)$
 for $l=2,\dots,n$, then\linebreak $(X_{1},\dots,X_{n})$
is SIAMX. If in addition
\begin{align}
&\prob[X_2>|X_1|\,]>\prob[X_1<-|X_2|\,]\label{suffcondstrictineq1ell0}\\
\mathrm{and}\quad&\prob[X_l>\max(|X_1|,\dots,|X_{l-1}|)]>0,\ \ \mathrm{for}\ l=3,\dots,n,\label{suffcondstrictineq1ell3}
\end{align}
then $(X_{1},\dots,X_{n})$ is SSIAMX.
%
%
If $(X_{1},\dots,X_{l})$ is LR$^{\mathrm{E}}(l)$  for $l=2,\dots,n$, then $(X_{1},\dots,X_{n})$
is SIAMN. If in addition
\begin{align}
&\prob[\,|X_2|<X_1]<\prob[X_2<-|X_1|\,]\label{suffcondstrictineq1ell00}\\
\mathrm{and}\quad&\prob[X_l<-\min(|X_1|,\dots,|X_{l-1}|)]>0,\ \ \mathrm{for}\ l=3\dots,n,\label{suffcondstrictineq2ell4}
\end{align}
 then $(X_{1},\dots,X_{n})$ is SSIAMN.
 %
%
 \end{thm}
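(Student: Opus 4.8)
The plan is to read off the SIAMX conclusion directly from Definition~\ref{incrabsmaxmin}, which asks for exactly the consecutive increments
\[
|\max(X_{1},\dots,X_{l-1})|\stoless|\max(X_{1},\dots,X_{l})|,\qquad l=2,\dots,n.
\]
I would establish each such increment separately, working inside the sub-vector $(X_{1},\dots,X_{l})$, whose joint law already determines both marginal distributions that appear; assembling all $n-1$ of them is precisely SIAMX.

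For $l\ge3$ the hypothesis says that $(X_{1},\dots,X_{l})$ is UR$_{\mathrm{E}}(l)$, i.e.\ UR$_{\mathrm{E}}(k,l)$ for some $k<l$. Applying \prettyref{pro:Rev-subsupexch-n}(i) to this $l$-dimensional vector (its generic dimension taken to be $l$) gives precisely $|\max(X_{i}\mid1\le i\le l,\;i\ne l)|\stoless|\max(X_{1},\dots,X_{l})|$, and since $\{i:1\le i\le l,\,i\ne l\}=\{1,\dots,l-1\}$ this is the required increment. The base case $l=2$ lies outside the scope of \prettyref{pro:Rev-subsupexch-n} (stated for $n\ge3$), so I would instead invoke \prettyref{pro:Rev-exch-5}: UR$_{\mathrm{E}}(2)$ is exactly the bivariate UR$_{\mathrm{E}}$ condition of Definition~\ref{Def7}, and part (i) of \prettyref{pro:Rev-exch-5} yields $|X_{1}|\stoless|\max(X_{1},X_{2})|$.

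To obtain the strict conclusion SSIAMX I would promote each increment to a strict inequality using the strictness clauses already supplied. For $l=2$, \prettyref{pro:Rev-exch-5} makes the inequality strict exactly when $\prob[\,|X_{1}|<X_{2}\,]>\prob[\,X_{1}<-|X_{2}|\,]$, which is hypothesis \eqref{suffcondstrictineq1ell0}. For $l\ge3$, the strictness clause of \prettyref{pro:Rev-subsupexch-n}, namely condition \eqref{strictineqmaxl} read inside $(X_{1},\dots,X_{l})$, requires $\prob[X_{l}>\max(|X_{1}|,\dots,|X_{l-1}|)]>0$, which is hypothesis \eqref{suffcondstrictineq1ell3}. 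With every increment strict, the whole chain is strict and SSIAMX follows.

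The SIAMN and SSIAMN assertions I would prove by the identical scheme with the roles of $\max$/$\min$, UR/LR, and sub-/super\-exchangeability interchanged: the $l=2$ increment $|X_{1}|\stoless|\min(X_{1},X_{2})|$ comes from \prettyref{pro:Rev-exch-5}(iv) under LR$^{\mathrm{E}}$ (strict under \eqref{suffcondstrictineq1ell00}), while for $l\ge3$ the increment $|\min(X_{1},\dots,X_{l-1})|\stoless|\min(X_{1},\dots,X_{l})|$ comes from \prettyref{pro:Rev-subsupexch-n}(iv) under LR$^{\mathrm{E}}(l)$, strict under its clause \eqref{eq:strictineqminm-1} (the condition intended by hypothesis \eqref{suffcondstrictineq2ell4}). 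I expect no genuine obstacle in the argument itself, since it is a direct assembly of the two propositions; the only care required is bookkeeping — aligning the index conventions between the bivariate and multivariate propositions, and recognizing that $l=2$ must be split off because \prettyref{pro:Rev-subsupexch-n} and its strictness criterion are stated only in dimension $\ge3$ and collapse there to a different, one-sided condition.
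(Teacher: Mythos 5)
Your proof is correct and is essentially the paper's own: the paper gives no separate proof of this theorem, saying only that \prettyref{pro:Rev-subsupexch-n} yields it, with the $l=2$ increments and the strictness criteria \eqref{suffcondstrictineq1ell0} and \eqref{suffcondstrictineq1ell00} supplied by \prettyref{pro:Rev-exch-5} exactly as in your assembly. You were also right to read the $\min$ in \eqref{suffcondstrictineq2ell4} as standing for $\max$, since only the $\max$ version matches the proposition's strictness clause \eqref{eq:strictineqminm-1} and the parallel condition \eqref{suffcondstrictineq2ell} of \prettyref{thm:Rev-exch-n}.
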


\begin{example}
\label{exa:Draw-without-replacement-ncontd}(\prettyref{exa:Draw-without-replacement-n}
continued). Let $X_{1},\dots,X_{n}$ be $n$ random draws without
replacement from the finite symmetric set $A\subset\mathbb{R}$, with
$n\le|A|$. It was seen in \prettyref{exa:Draw-without-replacement-n}
that $(X_{1},\dots,X_{l})$ need not be RE$(l)$ for $3\le l\le n$. However, it is UR$_{\mathrm{E}}(l)$, which is seen as follows:

The pmf $f(x_{1},\dots,x_{l})$ takes the value $c:=1/(|A|(|A|-1)\cdots(|A|-l+1))$
on the range \begin{equation}
\tilde{A}^{l}:=\{(x_{1},\dots,x_{l})\mid x_{1},\dots,x_{l}\in A\ \mathrm{and\ are\ mutually\ distinct}\},\label{Antilde}\end{equation}
 and is 0 for $(x_{1},x_{2},\dots,x_{l})\notin\tilde{A}^{l}$. Thus,
to verify via \eqref{defhigherULRE} that $(X_{1},\dots,X_{l})$ is
RE$(1,l)$ it suffices to show that $(x_{1},x_{2},\dots,x_{l})\in\tilde{A}^{l}$
whenever \linebreak $(-x_{l},x_{2},\dots,x_{l-1},-x_{1})\in\tilde{A}^{l}$ and
$|x_{1}|<x_{l}$ and $x_{i}<-|x_{1}|$ for all $i\ne k,l$. These
two strict inequalities imply that $x_{1},x_{i},x_{l}$ are mutually
distinct for all $i=2,\dots,l-1$, while $x_{2},\dots,x_{l-1}$ are
distinct by \eqref{Antilde}. The assertion follows since $-x_{i}\in A\implies x_{i}\in A$
by the symmetry of $A$.

It follows from \prettyref{thm:subsupRev-exch-n} that $(X_{1},\dots,X_{n})$
is SIAMX and, by symmetry, is SIAMN. Furthermore, conditions \eqref{suffcondstrictineq1ell3}
and \eqref{suffcondstrictineq2ell4} hold unless $l=n=|A|$, so $(X_{1},\dots,X_{n})$
is SSIAMX and SSIAMN, except possibly for the case $l=n$ when $n=|A|$. \hfill{}$\square$
\end{example}

\begin{example}
\label{exa:intraclass} (\prettyref{exa:Bivariate-Elliptical} extended).
Let $(X_{1},\dots,X_{n})$ have a centered multivariate elliptical
distribution with pdf of the form
\begin{equation*}
f(x_{1},\dots,x_{n})=|\Sigma|^{-1/2}g\left[(x_{1},\dots,x_{n})\,\Sigma^{-1}(x_{1},\dots,x_{n})'\right],
\end{equation*}
 where $\Sigma\equiv\{\sigma_{ij}\}$ is positive definite with \textit{intraclass
correlation structure}:\linebreak $\sigma_{ii}=\sigma^{2}$ and $\sigma_{ij}=\sigma^{2}\rho$
for all $1\le i\ne j\le n$, where $-1/(n-1)<\rho<1$. Then $f$ has
the form \begin{equation*}
f(x_{1},\dots,x_{n})=|\Sigma|^{-1/2}g\left[c(\sigma,\rho)\cdot\!\!\sum_{1\le i\le n}x_{i}^{2}-\rho\, d(\sigma,\rho)\cdot\!\!\!\!\sum_{1\le i<j\le n}\!\! x_{i}x_{j}\right],
\end{equation*}
 where $c(\cdot,\cdot)>0$ and $d(\cdot,\cdot)>0$.  Assume that $g$ is \emph{nonincreasing}
on $[0,\infty)$. Then for $\rho\ne0$,
\begin{align*}
 & \ \rho\,(x_{1}+x_{n})(x_{2}+\cdots+x_{n-1})\ge0\\
\implies & f(x_{1},\dots,x_{n})\ge f(-x_{n},x_{2},\dots,x_{n-1},-x_{1}),
\end{align*}
hence
 \begin{equation}\label{cases}
(X_{1},\dots,X_{n})\ \mathrm{is}\ \begin{cases}
\mathrm{UR}_{\mathrm{E}}(1,n),\  & \mathrm{if}\ \rho<0,\\
\mathrm{LR}^{\mathrm{E}}(1,n),\  & \mathrm{if}\ \rho>0.\end{cases}
\end{equation}
 Since $(X_{1},X_{2})$ is RE for all $\rho$, it
follows from Theorems \ref{thm:Rev-exch-2} and \ref{thm:subsupRev-exch-n}
that $(X_{1},\dots,X_{n})$ is SSIAMX{*} when $\rho<0$ and is SSIAMN{*}
when $\rho>0$.
However, $(X_{1},\dots,X_{n})\eqd(-X_{1},\dots,-X_{n})$ for all values of $\rho$,
so\linebreak $|\max(X_{1},\ldots,X_{n})|\eqd|\min(X_{1},\ldots,X_{n})|$. Thus
$(X_{1},\dots,X_{n})$ is SSIAMX{*} and SSIAMN{*} for all $\rho$.
\smallskip{}

Note that $(X_{1},\dots,X_{n})$ is also exchangeable, so
by \prettyref{cor:Exchangeable} and \prettyref{thm:subsupRev-exch-n}, $(-X_{1},X_{2},\ldots,X_{n})$ is
SIAMX{*} and SIAMN{*}. \hfill{}$\square$
\end{example}

\section{Results for Independent Symmetric Random Variables.}

As noted earlier, if $X_{1},\dots,X_{n}$ are iid and symmetric about
0 then $(X_{1},\dots,X_{n})$ is ESCI, hence RE$(k,l)$ for all $1\le k<l\le n$,
so the conclusions of \prettyref{thm:Rev-exch-n} hold. Under independence,
however, stochastic comparisons for the extreme order statistics can
be obtained under a weaker assumption than identical distributions,
namely stochastic ordering. This is illustrated by the following bivariate
result:
\begin{thm}
\label{bivariateindepsymm} Suppose that $X$ and $Y$ are independent
and symmetric about 0. If $|X|\stoless|Y|$ then \begin{equation}
|X|\stoless|\min(X,Y)|\eqd|\max(X,Y)|\stoless|Y|.\label{bivariatestoless}\end{equation}
 These stochastic inequalities are strict iff $|X|\stolesstr|Y|$. \end{thm}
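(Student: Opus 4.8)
The plan is to exploit the symmetry of $X$ and $Y$ about $0$ together with independence, reducing everything to the distribution functions of $|X|$ and $|Y|$ via elementary manipulations analogous to those in Example~\ref{exa:Sym-iid-2}. First I would establish the middle equality $|\min(X,Y)|\eqd|\max(X,Y)|$. Since $X$ and $Y$ are each symmetric about $0$ and independent, the pair $(X,Y)$ is sign-change-invariant, so $(X,Y)\eqd(-X,-Y)$; applying $|\max(\cdot)|$ and using $\max(-X,-Y)=-\min(X,Y)$ gives $|\max(X,Y)|\eqd|{-\min(X,Y)}|=|\min(X,Y)|$. Thus it suffices to prove the two outer stochastic inequalities for $|\max(X,Y)|$.

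The heart of the argument is a direct computation of $F_{|\max(X,Y)|}$. For $x\ge 0$, write as in Example~\ref{exa:Sym-iid-2}
\begin{equation*}
\prob[\,|\max(X,Y)|\le x\,]=\prob[\max(X,Y)\le x]-\prob[\max(X,Y)<-x].
\end{equation*}
By independence these factor as $\prob[X\le x]\prob[Y\le x]-\prob[X<-x]\prob[Y<-x]$. Setting $u=\prob[X>x]$ and $v=\prob[Y>x]$ and using symmetry ($\prob[X<-x]=u$, $\prob[Y<-x]=v$, and $\prob[X\le x]=1-u$ up to the atom at $x$, which I will handle by working with $\prob[X\ge -x]=1-u$ etc.), this becomes $(1-u)(1-v)-uv=1-u-v$. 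Hence
\begin{equation*}
F_{|\max(X,Y)|}(x)=1-u-v=\tfrac12\big(F_{|X|}(x)+F_{|Y|}(x)\big)
\end{equation*}
after identifying $1-2u=\prob[|X|\le x]=F_{|X|}(x)$ and similarly for $Y$. So the cdf of $|\max(X,Y)|$ is exactly the average of the cdfs of $|X|$ and $|Y|$.

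Given this averaging identity, the conclusion is immediate: the hypothesis $|X|\stoless|Y|$ means $F_{|X|}(x)\ge F_{|Y|}(x)$ for all $x$, so
\begin{equation*}
F_{|X|}(x)\ \ge\ \tfrac12\big(F_{|X|}(x)+F_{|Y|}(x)\big)\ =\ F_{|\max(X,Y)|}(x)\ \ge\ F_{|Y|}(x),
\end{equation*}
which is precisely $|X|\stoless|\max(X,Y)|\stoless|Y|$, establishing \eqref{bivariatestoless}. For the strictness claim, the first (and third) inequality above is strict at $x$ exactly when $F_{|X|}(x)>F_{|Y|}(x)$; since strict stochastic inequality $|X|\stolesstr|Y|$ means $F_{|X|}\ge F_{|Y|}$ everywhere with strict inequality somewhere, the equivalence with strictness of \eqref{bivariatestoless} follows directly, giving both directions of the ``iff''.

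The main obstacle I anticipate is purely bookkeeping: handling possible atoms of $X$ and $Y$ at the boundary points $\pm x$ so that the telescoping $(1-u)(1-v)-uv=1-u-v$ is justified cleanly. The clean way around this is to phrase the two probabilities consistently as $\prob[\max(X,Y)\le x]$ and $\prob[\max(X,Y)<-x]$, note that by symmetry the event $\{\max(X,Y)<-x\}=\{-\min(X,Y)>x\}$ has the same probability structure with $\le$ turned into $<$, and verify that the cross terms cancel regardless of atoms — the symmetry $X\eqd -X$ guarantees $\prob[X<-x]=\prob[X>x]$ exactly, so the cancellation is exact and no continuity assumption is needed.
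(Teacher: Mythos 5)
Your proof is correct and takes essentially the same approach as the paper: both arguments show that the cdf of $|\max(X,Y)|$ on $[0,\infty)$ is exactly the average $\tfrac12\bigl(F_{|X|}+F_{|Y|}\bigr)$, deduce the stochastic inequalities and the strictness equivalence from this averaging identity, and obtain $|\min(X,Y)|\eqd|\max(X,Y)|$ by symmetry. The only cosmetic difference is that you parameterize via $u=\prob[X>x]$, $v=\prob[Y>x]$ where the paper works directly with the cdfs $F$ and $G$ of $X$ and $Y$.
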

\begin{proof}
We may either pattern the proof after that of \prettyref{pro:Rev-exch-2} or use
this direct approach: \smallskip{}

If $F$ and $G$ are the cdfs of $X$ and $Y$, respectively, then
the cdfs of $|X|$ and $|Y|$ are $2F-1$ and $2G-1$ on $[0,\infty)$.
Furthernore, the cdf of $|\max(X,Y)|$ is \begin{align*}
FG-(1-F)(1-G)= & F+G-1
\\
= & \frac{1}{2}[(2F-1)+(2G-1)],\end{align*}
 the average of $2F-1$ and $2G-1$. But $|X|\stoless|Y|$ implies
that $2F-1\ge2G-1$, hence this average lies in the interval $[2G-1,\,2F-1]$,
and strictly inside this interval for some $x$ iff $|X|\stolesstr|Y|$.
This yields the stochastic inequalities in \eqref{bivariatestoless}
and the statement regarding strict stochastic inequality. The equality
in \eqref{bivariatestoless} follows by symmetry.
\end{proof}

Note that the stochastic ordering assumption $|X|\stoless|Y|$ in
Theorem \ref{bivariateindepsymm} is weaker than the monotone likelihood
ratio assumption in \prettyref{exa:Sym-iid-21}.
For example, let $|X|$ take the values $0$ and $1$ with probability
$1/2$ each, and let $|Y|$ take the values $0$, $1$, and $2$ with
probabilities $1/2$, $1/4$, and $1/4$ respectively. \medskip{}

In three or more dimensions we have the following partial complement
to \prettyref{pro:Rev-subsupexch-n} and \prettyref{thm:subsupRev-exch-n}:
\begin{thm}
\label{thm:Rev-exch-n3}Let $X_{1},\dots,X_{n}$ be independent symmetric
random variables. If $|X_{k}|\stoless|X_{l}|$ for a pair $(k,l)$
with $1\le k<l\le n$, then \begin{align}
|\max(X_{i}\mid1\le i\le n,\; i\ne l)|\stoless & |\max(X_{i}\mid1\le i\le n)|,\label{REnmax7}\\
|\min(X_{i}\mid1\le i\le n,\; i\ne l)|\stoless & |\min(X_{i}\mid1\le i\le n)|.\label{REnmin7}\end{align}
 If $|X_{k}|\stolesstr|X_{l}|$, the stochastic inequalities in \eqref{REnmax7}
and \eqref{REnmin7} are strict. \smallskip{}

Therefore, if $|X_{1}|\stoless\cdots\stoless|X_{n}|$ then $(X_{1},\dots,X_{n})$
is SIAMX and SIAMN. If $|X_{1}|\stolesstr\cdots\stolesstr|X_{n}|$,
then $(X_{1},\dots,X_{n})$ is SSIAMX and SSIAMN. \end{thm}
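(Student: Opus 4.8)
The plan is to prove the single-pair statement \eqref{REnmax7}--\eqref{REnmin7} first, since the final SIAMX/SIAMN conclusions follow by applying it to consecutive pairs $(l-1,l)$ and chaining the resulting stochastic inequalities. For the single pair, I would reduce to the essential two-variable comparison by conditioning on all the other coordinates. Write $M=\max(X_i\mid i\ne l)$, the maximum of the variables other than $X_l$, and $M'=\max(X_i\mid 1\le i\le n)=\max(M,X_l)$. Because the $X_i$ are independent, I would condition on the vector $(X_i\mid i\ne l)$, hence on the value of $M$; then $X_l$ remains independent of this conditioning. The task becomes showing, for each realized value of $M$, that the conditional distribution of $|\max(M,X_l)|$ stochastically dominates that of $|\max(M,X_k)|$ pointwise in the right sense, then integrating out. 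The key structural fact I would exploit is the clean cdf formula used in \prettyref{bivariateindepsymm}: for independent symmetric $U,V$ with cdfs $F_U,F_V$, the cdf of $|\max(U,V)|$ on $[0,\infty)$ is $F_U+F_V-1$, the average of the absolute-value cdfs. The symmetry of each $X_i$ about $0$ is what makes this additive structure work.

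Concretely, since swapping the roles of $X_k$ and $X_l$ only changes which of the two variables is ``the last one added,'' I would compare the full maximum $\max(X_1,\dots,X_n)$ against the maximum over $i\ne l$ versus the maximum over $i\ne k$. First I would establish the reduction: by independence and symmetry, $\max(X_i\mid i\ne l)$ and $X_l$ are independent symmetric, so $|\max(X_i\mid 1\le i\le n)|=|\max(\,\max(X_i\mid i\ne l),\,X_l)|$ has cdf equal to the average of $F_{|\max(X_i\mid i\ne l)|}$ and $F_{|X_l|}$, by the \prettyref{bivariateindepsymm} formula. But this requires $\max(X_i\mid i\ne l)$ to be symmetric about $0$, which it need not be. Here is the main obstacle: the maximum of symmetric random variables is generally \emph{not} symmetric, so the neat $F_U+F_V-1$ formula does not apply to the reduced problem directly. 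I expect the real work to be in circumventing this.

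The way I would circumvent it is to not reduce via the max-of-the-rest, but instead to condition on all coordinates \emph{except} $X_k$ and $X_l$ simultaneously, isolating the genuine two-variable effect. Fix the values of $X_i$ for $i\notin\{k,l\}$ and let $m=\max(X_i\mid i\notin\{k,l\})$ (set $m=-\infty$ if no such indices exist, i.e. $n=2$). Conditionally, $X_k$ and $X_l$ are still independent and symmetric, and the two quantities to compare are $|\max(m,X_k,X_l)|$ versus $|\max(m,X_l)|$ — but after the swap symmetry the correct comparison is between including $X_l$ versus including $X_k$ as the ``extra'' variable beyond the common block. Writing things this way, the difference of the two cdfs at level $x\ge0$ reduces, for each fixed $m$, to a comparison that depends only on the marginal cdfs of $X_k$ and $X_l$ evaluated at $\pm x$ and at $m$, and the hypothesis $|X_k|\stoless|X_l|$, i.e. $F_{|X_k|}(x)\ge F_{|X_l|}(x)$ for all $x$, is exactly what drives the sign. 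The plan is to write the conditional probability that the absolute max is $\le x$ explicitly in terms of $\prob[X_k\le x], \prob[X_k<-x]$ (and likewise for $X_l$) together with the indicator of whether $m\le x$ and whether $m<-x$, compare the two expressions, and verify the difference has the correct sign for every fixed configuration of the remaining coordinates. Integrating over those coordinates then yields \eqref{REnmax7}; \eqref{REnmin7} follows by replacing each $X_i$ by $-X_i$, using that $|\min|=|\max(-\cdot)|$ and that symmetry is preserved under sign change. Strictness when $|X_k|\stolesstr|X_l|$ follows because the sign-driving inequality is then strict on a set of positive probability in $x$, and one checks this survives the integration; the final chained conclusions are then immediate.
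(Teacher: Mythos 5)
Your core computation is sound, and in substance it \emph{is} the paper's own proof in disintegrated form. The paper argues unconditionally: with $(k,l)=(1,n)$, independence and symmetry give the cdf of $|\max(X_1,\dots,X_n)|$ at $x\ge0$ as $H_n(x)=\prod_{i=1}^nF_i(x)-\prod_{i=1}^n\bar F_i(x)$, and subtracting $H_{n-1}$ and using $\bar F_i(x)=\tfrac12\bar G_i(x)$ (where $G_i$ is the cdf of $|X_i|$) together with $F_i(x)\ge\tfrac12$ yields the sign. Your conditioning on the block $\{i\notin\{k,l\}\}$ reproduces exactly this algebra, since $\prod_{i\ne k,l}F_i(x)=\prob[M\le x]$ and $\prod_{i\ne k,l}\bar F_i(x)=\prob[M<-x]$ for $M=\max(X_i\mid i\notin\{k,l\})$; your case analysis on $m$ (the hypothesis $|X_k|\stoless|X_l|$ is needed only when $m<-x$; the case $-x\le m\le x$ is automatic) corresponds term by term to the paper's two products. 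One small point you wave at ("one checks this survives the integration"): the needed fact is $\prob[M\le x]\ge 2^{-(n-2)}>0$, which again comes from symmetry of the $X_i$; with that, strictness at a point $x$ where $\bar G_k(x)<\bar G_l(x)$ follows because the conditional inequality is strict on both $\{-x\le M\le x\}$ (since $F_k(x)\ge\tfrac12$ and $\bar F_l(x)>0$) and $\{M<-x\}$.

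There is, however, one genuine error to repair, and it is not a harmless slip because the theorem is \emph{not} symmetric in $k$ and $l$. You write that "the two quantities to compare are $|\max(m,X_k,X_l)|$ versus $|\max(m,X_l)|$", i.e.\ the full maximum against the maximum with $X_k$ (the stochastically \emph{smaller} variable) dropped; your follow-up phrase "including $X_l$ versus including $X_k$" names yet a third comparison, $|\max(m,X_l)|$ versus $|\max(m,X_k)|$. Neither is \eqref{REnmax7}, which compares the full maximum against $|\max(m,X_k)|$, with $X_l$ dropped. The drop-$X_k$ statement you named is false in general: take $n=3$ with independent symmetric $X_j=\pm2$, $X_k\equiv0$, $X_l=\pm3$ (fair signs); then $|X_k|\stoless|X_l|$, but $\prob[\,|\max(X_j,X_k,X_l)|\le1\,]=\tfrac14>0=\prob[\,|\max(X_j,X_l)|\le1\,]$, so $|\max(X_j,X_l)|\stoless|\max(X_j,X_k,X_l)|$ fails. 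Your intuition that "swapping the roles of $X_k$ and $X_l$ only changes which variable is the last one added" is precisely what breaks down: adding the stochastically larger variable increases the absolute maximum, adding the smaller one need not. Run your planned computation for the correct pair and everything closes as in the paper: the difference of conditional cdfs equals $-\mathbf{1}[m\le x]\,F_k(x)\bar F_l(x)+\mathbf{1}[m<-x]\,\bar F_k(x)F_l(x)$, which is $-F_k\bar F_l\le0$ when $-x\le m\le x$ and equals $\tfrac12\bigl(\bar G_k(x)-\bar G_l(x)\bigr)\le0$ when $m<-x$.
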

\begin{proof}
Without loss of generality take $k=1$ and $l=n$. Set $F_{i}(x)=\prob[X_{i}\le x]$,
the cdf of $X_{i}$, and set $\bar{F}_{i}=1-F_{i}$. Similarly, let
$G_{i}$ denote the cdf of $|X_{i}|$ and $\bar{G}_{i}=1-G_{i}$.
By symmetry, for $i=1,\dots,n$ and $x\ge0$ we have \begin{equation}
\bar{F}_{i}(x)=\frac{1}{2}\bar{G}_{i}(x).\label{FbarGbar}\end{equation}
 If we let $H_{n}$ denote the cdf of $|\max(X_{i}\mid1\le i\le n)|$,
then by independence,
\begin{equation}
H_{n}(x)=\prod_{i=1}^{n}F_{i}(x)-\prod_{i=1}^{n}\bar{F}_{i}(x),\quad\mathrm{for}\ x\ge0.\label{cdfHFFbar}\end{equation}
 Therefore, by \eqref{cdfHFFbar} and \eqref{FbarGbar},
 \begin{align*}
 & \ H_{n}(x)-H_{n-1}(x)\\
= & \ \bar{F}_{1}(x)F_{n}(x)\prod_{i=2}^{n-1}\bar{F}_{i}(x)-F_{1}(x)\bar{F}_{n}(x)\prod_{i=2}^{n-1}F_{i}(x)\\
= & \ \frac{1}{4}\left[\bar{G}_{1}(x)(2-\bar{G}_{n}(x))\prod_{i=2}^{n-1}\bar{F}_{i}(x)-(2-\bar{G}_{1}(x))\bar{G}_{n}(x)\prod_{i=2}^{n-1}F_{i}(x)\right].\end{align*}
 Since $\bar{F}_{i}(x)\le F_{i}(x)$ for $i=2,\dots,n-1$ and $|X_{1}|\stoless|X_{n}|\implies\bar{G}_{1}(x)\le\bar{G}_{n}(x)$,
it follows that $H_{n}(x)\le H_{n-1}(x)$ for $x\ge0$, which confirms
\eqref{REnmax7}. \smallskip{}

If $|X_{1}|\stolesstr|X_{n}|$ then $\bar{G}_{1}(x)<\bar{G}_{n}(x)$
for some $x\ge0$. Since $F_{i}(x)\ge\frac{1}{2}>0$ for $i=2,\dots,n-1$,
it follows that $H_{n}(x)<H_{n-1}(x)$ for this $x$, hence the stochastic
inequality in \eqref{REnmax7} is strict. The remaining assertions
are straightforward. \end{proof}
\begin{example}
Let $X_{1},\dots,X_{n}$ be independent symmetric random variables,
and let $|X_{i}|$ ($i=1,\ldots,n$) take the values $0,1,\ldots,i$
with probability $\frac{1}{2},\frac{1}{2i},\ldots,\frac{1}{2i}$,
respectively. Then it is clear that $|X_{1}|\stolesstr\cdots\stolesstr|X_{n}|$.
It follows from \prettyref{thm:Rev-exch-n3} that $(X_{1},\dots,X_{n})$
is SSIAMX and SSIAMN. \hfill{}$\square$
\end{example}

\bibliographystyle{imsart-nameyear}
\bibliography{backfill}

\end{document}